\documentclass[11pt,a4paper]{article}

\usepackage{epsf,epsfig,amsfonts,amsgen,amsmath,amstext,amsbsy,amsopn,amsthm}
\usepackage{amsmath,times,mathptmx}
\usepackage{amsfonts,amsthm,amssymb}
\usepackage{graphicx}
\usepackage{latexsym,bm}
\usepackage{indentfirst}
\usepackage{color}
\usepackage[colorlinks=true,anchorcolor=blue,filecolor=blue,linkcolor=blue,urlcolor=blue,citecolor=blue]{hyperref}
\usepackage{float}
\usepackage{tikz}
\usepackage{verbatim}
\usepackage{mathrsfs}
\usepackage[nameinlink,noabbrev]{cleveref}
\usepackage{enumitem}

\newcommand{\floor}[1]

\evensidemargin=\oddsidemargin
\newtheorem{theorem}{Theorem}[section]
\newtheorem{problem}{Problem}[section]

\newtheorem{lemma}{Lemma}[section]
\newtheorem{corollary}{Corollary}[section]

\addtocounter{section}{0}

\begin{document}

\title{Nikiforov's spectral consecutive cycle problem and the connected-matching method}

\author{{\bf Bo Ning}\thanks{College of Computer Science, Nankai University, Tianjin 300350, PR China. Supported by the National 
NSFC (No. 12371350). Email: bo.ning@nankai.edu.cn (B. Ning).}~~~~ {\bf Mingqing Zhai}\thanks{School of Mathematics and Statistics, Nanjing University of Science and Technology. Nanjing 210094, Jiangsu, PR China. Corresponding author. Supported by NSFC (No. 12571369). E-mail address: mqzhai@njust.edu.cn (M. Zhai).}}
\date{}

\maketitle

\begin{abstract}
Let $\rho(G)$ denote the adjacency spectral radius of a graph $G$ of order $n$.
We determine the sharp constant in an open problem of Nikiforov (2008) on cycles of
consecutive lengths.  
For every $\varepsilon>0$ and all sufficiently large $n$, 
if $G$ is an $n$-vertex graph with $\rho(G)>\sqrt{\lfloor{n^2/4}\rfloor},$
then $G$ contains a cycle $C_g$ for every integer length
$3\le g\le (\frac{3-\sqrt5}{2}-\varepsilon)n.$
The constant $(3-\sqrt5)/2$ is the best possible, as shown by the split graph
$K_k\vee\overline K_{n-k}$ with $k\sim(3-\sqrt5)n/4$. 
Our result improves all previous results [LAA2008, CPC2020, JGT2023, JGT2023, GC2024].
The proof combines the degree form of Szemer\'edi's regularity lemma, 
a spectral matching theorem of Feng-Yu-Zhang, Weyl’s inequality, 
a refinement of \L{}uczak's connected matching embedding method, and other ideas.
\end{abstract}

\begin{flushleft}
\noindent\textbf{Keywords:} spectral radius; consecutive cycles; connected matching; regularity lemma; pancyclicity

\textbf{AMS subject classifications:} 05C50; 05C38; 05C35
\end{flushleft}

\maketitle

\section{Introduction}\label{sec:introduction}

Unless otherwise explicitly stated\footnote{In this paper, in the statement and proof of Lemma \ref{lem:closed-walk}, 
we need the concept of multi-graphs}, all graphs considered in this paper are finite and simple.
For a graph $G$, let $V(G)$
and $E(G)$ denote its vertex set and edge set, respectively, and write
$n(G)=|V(G)|$ and $e(G)=|E(G)|$.  
The minimum degree, matching number, and
adjacency spectral radius of $G$ are denoted by $\delta(G)$, $\nu(G)$, and
$\rho(G)$, respectively.  
For a path or a walk $P$, let $\ell(P)=|E(P)|$.  
As usual, $C_g$ denotes a cycle of length $g$, and
$G\vee H$ denotes the join of two vertex-disjoint graphs $G$ and $H$.  
A block in $G$ is a maximal $2$-connected subgraph of $G$.

The distribution of cycle lengths in dense graphs is a classical topic in extremal graph theory. 
A graph $G$ is called \emph{pancylic}, if it contains all cycles $C_{g}$ for each length $g\in [3,n]$. 
This concept was introduced by Bondy in 1971  \cite{Bondy1971}. 
In the same paper \cite{Bondy1971}, 
Bondy proved that every $n$-vertex Hamiltonian graph $G$ is pancyclic, if $e(G)\geq \frac{n^2}{4}$, 
unless $G=K_{\frac{n}{2},\frac{n}{2}}$ where $n$ is even. 
As a consequence, the following result is true: 
every $n$-vertex Hamiltonian graph $G$ is pancyclic if every non-adjacent vertices have degree sum at least $n$, 
unless $G=K_{\frac{n}{2},\frac{n}{2}}$ where $n$ is even. When we remove the condition of ``$G$ is Hamiltonian", 
a classical result in Bollob\'as's textbook \cite{Bollobas1978} showed that 
an $n$-vertex graph with $e(G)>\frac{n^2}{4}$ contains all cycle $C_{g}$ for $g\in [3,\lfloor\frac{n+3}{2}\rfloor]$. 
Nikiforov \cite{Nikiforov2008} proposed
the corresponding spectral problem in 2008, 
which is listed as Problem 3 in \cite{LiuNing2023}: 

\begin{problem}[Nikiforov \cite{Nikiforov2008}]\label{prob:main}
Determine the largest constant $C$ such that, for every $\varepsilon>0$ and all sufficiently
large $n$, any graph $G$ on $n$ vertices whose spectral radius satisfies
\begin{equation}\label{eq:threshold}
 \rho(G)>\sqrt{\lfloor{n^2/4}\rfloor}
\end{equation}
contains a cycle $C_g$ for every integer length $3\le g\le(C-\varepsilon)n$.
\end{problem}

The condition stated in Problem \ref{prob:main} is sharp: 
consider the complete bipartite graph $K_{\lceil\frac{n}{2}\rceil,\lfloor\frac{n}{2}\rfloor}$.
For this example, we have $\rho(K_{\lceil\frac{n}{2}\rceil,\lfloor\frac{n}{2}\rfloor})=\sqrt{\lfloor n^2/4\rfloor}$, 
and it contains no odd cycles.

By means of deleting edges with the least eigencomponent, Nikiforov~\cite{Nikiforov2008} established the first lower bound $C\ge 1/320$. 
This bound was subsequently improved to $1/160$ by Ning and Peng~\cite{NingPeng2020}, 
who employed ideas from generalized Tur\'an-type theorems together with a theorem recorded in Bollob\'as \cite{Bollobas1978}. 
Employing the classical eigenvector method, Zhai and Lin~\cite{ZhaiLin2023} further raised the bound to $1/7$. 
Independently and simultaneously, Li and Ning~\cite{LiNing2023} improved Nikiforov's constant to $1/4$. 
In their work, Li and Ning developed new machinery for Nikiforov's problem, 
which incorporates an even-cycle analog of the Erd\H{o}s--Gallai theorem (see Lemma 2 in \cite{LiNing2023}), 
a hybrid application of the Gould--Haxell--Scott theorem \cite{GouldHaxell2002} and the Voss--Zuluaga theorem \cite{VossZuluaga1977}, 
as well as the Sun--Das spectral inequality \cite{SunDs2020}. This machinery was later utilized by Zhang~\cite{Zhang2024}, 
who also used a graph-decomposition argument (see \cite{FLSY2015}) to further improve the constant to $1/3$. 
On the other hand, Nikiforov's consecutive cycle problem and tools developed are related to other problems in spectral and extremal graph theory, 
see \cite{LGHSXZ2024,ZhangZhao2023}. 

In this paper, we completely settle Problem \ref{prob:main}.

\begin{theorem}\label{thm:main}
Denote $C_0:=\frac12(3-\sqrt5).$
For every $\varepsilon>0$, there exists $n_0=n_0(\varepsilon)$ such that for all $n\ge n_0$,
any $n$-vertex graph $G$ satisfying \eqref{eq:threshold} contains a cycle $C_g$ for every integer length
$3\le g\le(C_0-\varepsilon)n.$
Furthermore, the largest constant in Problem \ref{prob:main} is $C=C_0.$
\end{theorem}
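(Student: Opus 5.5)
The plan splits into a sharpness construction and a regularity-based embedding argument.

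\emph{Sharpness.} Take $G=K_k\vee\overline K_{n-k}$. Every cycle uses at most $k$ vertices of the independent set, since these vertices are pairwise separated by vertices of $K_k$. Hence its circumference is $2k$, and $G$ has no $C_g$ with $g>2k$. Its spectral radius is the largest root of $x^2-(k-1)x-k(n-k)=0$. With $k=\alpha n$ this gives $\rho\approx \frac{n}{2}\big(\alpha+\sqrt{\alpha^2+4\alpha(1-\alpha)}\big)$. The condition $\rho>n/2$ becomes $\sqrt{4\alpha-3\alpha^2}>1-\alpha$, that is, $4\alpha^2-6\alpha+1<0$, i.e.\ $\alpha>\frac{3-\sqrt5}{4}$. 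Choosing $k$ slightly above $(3-\sqrt5)n/4$ gives $\rho>\sqrt{\lfloor n^2/4\rfloor}$ (lower-order terms are absorbed by taking $k$ one larger than the threshold). The circumference is then about $(C_0+o(1))n$, so no constant larger than $C_0$ works.

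\emph{Upper part.} Let $G$ satisfy \eqref{eq:threshold}. By the known results, short cycles of every length up to a small linear bound already exist, and in particular odd cycles exist. First, by deleting vertices of small degree (the standard spectral cleaning: removing a vertex of degree below roughly $\rho/2$ barely lowers $\rho^2-\lfloor n^2/4\rfloor$), I reduce to a subgraph with minimum degree linear in $n$ and spectral radius still at least about $n'/2$. Next, I apply the degree form of Szemerédi's regularity lemma with parameters $\varepsilon'\ll\varepsilon$ and obtain a reduced graph $R$ on $t$ clusters. By Weyl's inequality, the spectral radius of the cluster-blown-up graph differs from $\rho(G)$ by $o(n)$, so the weighted reduced graph satisfies $\rho(R)\gtrsim t/2$.

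The key combinatorial input is the spectral matching theorem of Feng--Yu--Zhang. Among graphs with given order and matching number $\nu$, it bounds $\rho$ by the larger of the extremal values, attained by $K_{\nu}\vee\overline K_{t-\nu}$ and by $K_{2\nu+1}\cup \overline{K}$. From $\rho(R)\gtrsim t/2$ this forces a matching of size at least $(C_0/2-\varepsilon)t$ in $R$; this is exactly where the quadratic $4\alpha^2-6\alpha+1$ reappears. I also need the matching to lie in one component of $R$. Since the spectral radius of a disconnected graph is the maximum over its components, I apply the matching theorem to the component achieving $\rho(R)$, suitably reweighted by its order. This gives a connected matching $M$ covering about $C_0 t$ clusters, together with a non-bipartite component. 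Non-bipartiteness holds because otherwise $\rho$ would be at most $\sqrt{e}$, which is at most $n/2$ up to a stability error.

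\emph{Embedding.} Finally, I use a refinement of Łuczak's connected matching method. Each regular matching edge carries paths of any length up to about twice the cluster size. These paths are linked along connecting walks in $R$ and close into a cycle. Using an odd closed walk in the component (this is where Lemma \ref{lem:closed-walk} on multigraph closed walks enters), I can adjust parity, which yields every length $g$ from a small constant times $n$ up to $(C_0-\varepsilon)n$. Short lengths are covered by the earlier results of Zhang and Li--Ning. The main obstacle is the matching step: converting the spectral condition on the weighted reduced graph into a \emph{connected} matching of the sharp size $C_0 t/2$. This requires handling the loss in Weyl's inequality and the disconnected case precisely enough that the constant is not degraded. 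My first attempt there is a careful stability version of Feng--Yu--Zhang applied to the largest-$\rho$ component.
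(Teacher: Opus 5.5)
\paragraph{Verdict: genuine gap in the odd case.} Your sharpness construction, the even case, and the odd case when the relevant component $R_0$ of the reduced graph is non-bipartite all follow the paper's route. The gap is your claim that $R_0$ is non-bipartite ``because otherwise $\rho$ would be at most $\sqrt{e}$, which is at most $n/2$ up to a stability error.''

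That inequality shows that $G$ is non-bipartite, but it says nothing about $R_0$. After regularization you only know $\rho(R)>(\frac12-\delta)r$. A bipartite $R_0$ is fully compatible with this; it only forces $n(R_0)>(1-2\delta)r$. For example, $K_{\lceil n/2\rceil,\lfloor n/2\rfloor}$ plus one edge satisfies \eqref{eq:threshold}. Nothing prevents Theorem~\ref{thm:regularity} from returning clusters that each lie on one side. The extra edge then sits in a pair of density $1/\ell^2<d$ and is discarded. The reduced graph is complete bipartite, and every odd cycle of $G$ uses an edge that $R$ does not see. In this situation there is no odd closed walk in $R_0$, and your embedding produces only even cycles. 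This bipartite-reduced case is the main difficulty of the odd part, and your plan has no mechanism for it.

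\paragraph{What is missing.} You need a way to import parity from outside the regular structure while controlling its length. The paper does this in four steps.
\begin{enumerate}
\item When $R_0$ is bipartite it is almost spanning. A spanning tree $T\supseteq M$ of $R_0$ then yields a $2$-connected bipartite skeleton $B$ with $n(B)\ge(1-\beta)n$ (Lemma~\ref{lem:bipartite-skeleton}).
\item A Schur-complement estimate combined with Weyl's inequality (Lemma~\ref{lem:bipartite-block}) shows that the \emph{block} of $G$ containing $B$ is non-bipartite. Non-bipartiteness of $G$ alone would not suffice, since an odd cycle could hang off a cut vertex.
\item Lemma~\ref{lem:parity-ear} then gives a parity-breaking $B$-path $P^*$ with $\ell(P^*)\le\beta n+1$.
\item Lemma~\ref{lem:one-handle} closes $P^*$ through the regular pairs along an $a$--$b$ walk in $T$. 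This gives all odd lengths from $\ell(P^*)+O(r)$ up to $(C_0-\varepsilon)n$. The shortness of $P^*$ is exactly what makes this interval meet the range of Theorem~\ref{thm:short-cycles}.
\end{enumerate}

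\paragraph{Two smaller points.} The step you flag as the main obstacle is routine. Since $\max\{2x,\phi(x)\}<\frac12$ for $x<c_0$, plain Feng--Yu--Zhang applied to the component of largest spectral radius absorbs the $\delta$-loss (Lemma~\ref{lem:matching-extraction}); no stability version is needed. Your minimum-degree cleaning is also unnecessary. The paper regularizes $G$ directly and controls the loss via Weyl's inequality and $\rho\le\sqrt{2e}$.
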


The paper is organized as follows. In Section \ref{Sec:ProofMethods}, we give a skech of proof methods used in this paper.
In Section \ref{sec:prelim}, we state the regularity, 
short-cycle and matching tools and verify the split-graph obstruction. 
In Sections \ref{Subsec:regularpairs} and \ref{Subsec:spectral}, we introduce the definition of regular pairs, 
and the degree form of Regularity Lemma and spectral properties needed, respectively.
In Sections \ref{sec:regular-pair}
and \ref{sec:embedding}, we develop exact paths in one regular pair, 
and prove the prescribed-parity connected-matching lemma.  
The spectral extraction of a connected matching is given in Section \ref{sec:spectral-matching}. We deal with
consecutive even and odd cycles in Sections \ref{sec:even} and \ref{sec:odd}, respectively.  

\section{Proof Methods}\label{Sec:ProofMethods}
The most novelty of this paper is to introduce the spectral version of connected matching method in spectral graph theory.
A \emph{connected matching} is a matching contained in a connected component of a graph.
The connected matching  method was introduced by \L{}uczak \cite{Luczak1999} in the
study of Ramsey-type problems about paths and cycles: to use connected matchings in conjunction with Szemer\'edi’s regularity lemma. 
An important observation due to \L{}uczak is that the problems about monochromatic paths and cycles in complete graphs
can be reduced to ones about monochromatic connected matchings in almost complete graphs by applying the regularity lemma.
Subsequently, this method was systematically formalized by Figaj and \L{}uczak \cite{FigajLuczak2007} and refined by Letzter in \cite{L2022}. 
For surveys on the applications of Regularity Lemma \footnote{As we known, there are rare applications of regularity lemma to spectral graph theory. 
We refer the reader to \cite[Theorem~4.23]{Guiduli1996}.}, we refer to \cite{KomlosSimonovits1996,SS2019}. 
For the connected matching method and developed versions, we refer to \cite{Luczak1999,FigajLuczak2007,L2022}.

Our proof uses a spectral motivation of the connected-matching method.  We apply the
degree form of Szemer\'edi's regularity lemma (Theorem \ref{thm:regularity}) while retaining spectral radius
$n/2-o(n)$ (Lemma \ref{thm:matching-spectrum}).  The reduced graph therefore has spectral radius
$r/2-o(r)$, where $r$ is the number of parts given in the regularity lemma.  The sharp spectral theorem for graphs of given matching number,
due to Feng, Yu, and Zhang \cite{FengYuZhang2007}, then produces a connected
matching of size $(c_0-o(1))r$ (Lemma \ref{lem:matching-extraction}).  

Exact length control is supplied by a self-contained embedding argument.  
A regular pair contains paths of every admissible odd length between prescribed typical endpoints.  
Replacing marked matching edges in a closed walk of the
reduced graph by such paths yields every required even length.  The same
construction gives the odd lengths when the relevant reduced component is non-bipartite (Lemma \ref{lem:bipartite-block}).

The remaining case is substantially different.  
If the component
of the reduced graph is bipartite, then it covers almost all reduced vertices.  
From a spanning tree of this component we form an almost-spanning
bipartite $2$-connected subgraph of the original graph.  A Schur-complement
estimate shows that the block containing this subgraph cannot be bipartite (Lemma \ref{lem:bipartite-block}).
The block therefore contains a short path whose parity is opposite to the
bipartite skeleton.  After deleting the vertices of this path from the
clusters and applying a slicing argument (Lemma \ref{lem:slicing}), a prescribed-endpoint
connected-matching embedding (see Corollary \ref{cor:connected-matching}) produces an interval of odd cycle lengths we wanted.  
Nikiforov's short-cycle theorem covers the bounded initial part, 
this idea already appeared in \cite{LiNing2023}.
  
The key point is that the proof here uses a single large connected
matching to control all lengths simultaneously at the threshold \eqref{eq:threshold}.

\section{Preliminaries}\label{sec:prelim}

For a positive integer $r$, we write $[r]=\{1,\ldots,r\}$.  
Given a graph $G$ and two disjoint nonempty vertex subsets $U,V\subseteq V(G)$, 
we denote by $e(U,V)$ the number of edges of $G$ with one endpoint in $U$ and the other in $V$, 
and define the \emph{edge density} between $U$ and $V$ as
$$d(U,V)=\frac{e(U,V)}{|U||V|}.$$
For $v\in V(G)$ and $U\subseteq V(G)$, 
we write $d_U(v)$ for the number of neighbors of $v$ in $U$.

\subsection{Regular pairs}\label{Subsec:regularpairs}

Given $\eta\in (0,1)$.
A pair $(U,V)$ of disjoint vertex subsets in a graph $G$ is called \emph{$\eta$-regular} if, for all
$X\subseteq U$ and $Y\subseteq V$ with $|X|\ge\eta|U|$ and $|Y|\ge\eta|V|$, 
one has
\[
 \big|d(X,Y)-d(U,V)\big|<\eta.
\]
We use the following degree form of Szemer\'edi's regularity lemma (see \cite{KomlosSimonovits1996, Szemeredi1978}).

\begin{theorem}[Degree form of the regularity lemma]\label{thm:regularity}
  For all $d\in(0,1]$, $\eta\in(0,1)$, and any fixed positive integer $m_0\geq2$, there exist
 integers $M_0=M_0(d,\eta,m_0)$ and $n_0=n_0(d,\eta,m_0)$ 
 such that every graph $G$ of order $n\ge n_0$ admits a partition
 $V(G)=V_0\cup V_1\cup\cdots\cup V_r$
 and a spanning subgraph $G'$ satisfying the following properties:

(i) $m_0\le r\le M_0$;

(ii) $|V_0|\le\eta n$ and $|V_1|=\cdots=|V_r|$;

(iii) $d_{G'}(v)\ge d_G(v)-(d+\eta)n$ for every $v\in V(G)$;

(iv) $G'[V_i]$ is empty for every $i\in[r]$;

(v) For $G'$ and all $1\le i<j\le r$, 
the pair $(V_i,V_j)$ is $\eta$-regular with $d(V_i,V_j)\in \{0\}\cup [d,1]$.
\end{theorem}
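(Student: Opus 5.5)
This is the standard degree form of Szemerédi's regularity lemma. I would derive it from the usual partition form rather than reprove regularity from scratch. I would invoke the classical statement: for every $\eta'>0$ and $m_0$ there is $M_0'$ such that every sufficiently large graph has an equipartition $V_0\cup V_1\cup\cdots\cup V_r$ with $m_0\le r\le M_0'$, $|V_0|\le\eta' n$, and all but at most $\eta' r^2$ pairs $(V_i,V_j)$ being $\eta'$-regular. This form is proved by the energy-increment argument: refine an irregular partition along witness sets, so the mean-square density increases by $\Omega(\eta'^5)$ per step, giving at most $O(\eta'^{-5})$ steps. I would take that as given, citing \cite{Szemeredi1978,KomlosSimonovits1996}, and choose $\eta'$ to be a small function of $\eta$, e.g. $\eta'=\eta^2/100$, so that the cleaning losses below sum to at most $(d+\eta)n$.

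The cleaning produces $G'$ from $G$ in four steps.
\begin{enumerate}
\item Delete all edges inside each $V_i$. This costs each vertex at most $|V_i|\le n/r\le n/m_0$ edges.
\item Delete all edges between irregular pairs.
\item Delete all edges between pairs of density less than $d$. This costs each vertex at most $dn$ edges in total.
\item Delete all edges incident to $V_0$, or equivalently treat $V_0$ as exceptional.
\end{enumerate}

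The delicate point is step 2. Irregular pairs number at most $\eta' r^2$, but they may concentrate at a single cluster, so a vertex could lose many edges. To handle this, move to $V_0$ every cluster $V_i$ lying in more than $\sqrt{\eta'}\,r$ irregular pairs. There are at most $2\sqrt{\eta'}\,r$ such clusters, so $V_0$ grows by at most $2\sqrt{\eta'}\,n$. Every remaining vertex then loses at most $\sqrt{\eta'}\,r\cdot n/r=\sqrt{\eta'}\,n$ edges from irregular pairs. Vertices of $V_0$ satisfy (iii) only if they lose at most $(d+\eta)n$ edges. This is the real obstacle, since we cannot simply delete all their edges. Two remedies are available:
\begin{itemize}
\item Keep the edges from $V_0$ to the clusters in $G'$. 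Condition (iv) restricts only $G'[V_i]$ for $i\ge1$, and (v) concerns only pairs $(V_i,V_j)$ with $i,j\ge1$, so edges at $V_0$ are unconstrained.
\item Alternatively, move a vertex to $V_0$ if it has atypically many neighbours in low-density or irregular pairs.
\end{itemize}
I would adopt the first remedy: retain all edges incident to $V_0$. Then (iii) holds for all vertices with total loss $n/m_0+\sqrt{\eta'}n+dn\le(d+\eta)n$, once $m_0\ge 2/\eta$ and $\eta'$ is small. Here $m_0$ may be increased without loss, since the lemma allows any lower bound.

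Finally I check (v). Deleting all edges of a pair makes it trivially $\eta$-regular of density $0$. Every kept pair was $\eta'$-regular, hence $\eta$-regular, with density at least $d$, and its edges were untouched. The total size of $V_0$ is at most $(\eta'+2\sqrt{\eta'})n\le\eta n$. Equal cluster sizes are preserved because whole clusters are moved. Setting $M_0=M_0'(\eta',m_0)$ gives (i)–(v).
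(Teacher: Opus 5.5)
The paper does not prove this theorem; it quotes it from Szemer\'edi and Koml\'os--Simonovits. Deriving it from the partition form is the natural route, and two of your steps are sound. Keeping every edge at $V_0$ is the right way to secure (iii) there. Discarding the clusters with more than $\sqrt{\eta'}\,r$ irregular partners correctly controls the loss from irregular pairs.

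\textbf{The gap is step 3.} The claim that deleting the pairs of density below $d$ costs each vertex at most $dn$ edges is false. The condition $d(V_i,V_j)<d$ bounds only the \emph{average} degree of $V_i$ into $V_j$. A single vertex of $V_i$ may be adjacent to all of $V_j$ without affecting $\eta'$-regularity, because regularity only sees sets of size at least $\eta'|V_i|$.

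Here is a concrete counterexample, for $d,\eta\le 1/3$ say. Take $G(n,d/2)$ and add one vertex $v$ joined to everything. With high probability every equipartition into boundedly many clusters satisfies the conclusion of the classical lemma, with all pairs regular of density about $d/2<d$. So your argument must handle an equipartition with $v\in V_1$. No cluster is discarded in step 2, and step 3 deletes every edge between clusters. Hence $d_{G'}(v)=|V_0|\le\eta' n$ while $d_G(v)=n-1$, which violates (iii).

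\textbf{What is missing is a vertex-level cleaning.} This is essentially your ``second remedy'', and it is needed in addition to the first, not instead of it. The steps are as follows.
\begin{enumerate}
\item For each $\eta'$-regular pair, fewer than $\eta'|V_i|$ vertices $u\in V_i$ satisfy $d_{V_j}(u)\ge (d(V_i,V_j)+\eta')|V_j|$. This is the upper-tail analogue of Lemma~\ref{lem:typical}.
\item By averaging, fewer than $\sqrt{\eta'}\,|V_i|$ vertices of $V_i$ are atypical in this sense for more than $\sqrt{\eta'}\,r$ indices $j$.
\item Move exactly $\lceil\sqrt{\eta'}\,|V_i|\rceil$ vertices of each cluster into $V_0$, including all these atypical ones. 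They keep all their edges, and the cluster sizes stay equal.
\item Re-establish regularity by slicing. A sub-pair of relative size at least $1/2$ of an $\eta'$-regular pair is $2\eta'$-regular, with density changed by less than $\eta'$.
\item Only now delete the pairs whose trimmed density is below $d$; their original density is below $d+\eta'$.
\end{enumerate}
A surviving vertex then loses fewer than $(d+2\eta')|V_j|$ edges in every such pair except at most $\sqrt{\eta'}\,r$ of them. Its total loss to sparse pairs is therefore at most $(d+2\eta'+\sqrt{\eta'})n$.

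Now add the loss inside clusters and the loss to irregular pairs. With $\eta'=\eta^2/100$ and a starting lower bound of at least $2/\eta$ on the number of clusters, the total loss is at most $(d+\eta)n$, and $|V_0|\le\eta n$ also holds. Take the starting lower bound to be at least $2m_0$ as well. Then $r\ge m_0$ survives the removal of clusters, which your write-up does not check.
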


Let $G'$ be the graph constructed as in Theorem \ref{thm:regularity}.
The \emph{reduced graph} $R(G')$ of $G'$ is the graph with vertex set $[r]$ and edge set $\{ij: d(V_i,V_j)>0\}$.
Such a $G'$ is called a \emph{host graph}.

\subsection{Useful spectral properties}\label{Subsec:spectral}

We will repeatedly make use of the following two elementary spectral properties. 
Let $\Delta(G)$ be the maximum degree of a graph $G$. Then 
\begin{equation}\label{eq:spectral-basic}
 \rho(G)\le \Delta(G)
\quad\text{and}\quad
 \rho(G)\le\sqrt{2e(G)}.
\end{equation}
The second inequality follows from the basic spectral identity that 
the sum of the squares of all adjacency eigenvalues of $G$ equals $2e(G)$.

Given a positive integer $t$ and a graph $G$, we denote by $G[t]$ the \emph{$t$-blow-up} of $G$,
which is constructed from $G$ via the following two steps:

(i) Replace each vertex $i\in V(G)$ with an independent set $V_i$ of size $t$.

(ii) The edge set of $G[t]$ is given by $E(G[t])=\{v_iv_j:~v_i\in V_i, v_j\in V_j, ij\in E(G)\}$.

\noindent Additionally, we have the following spectral property:
\begin{equation}\label{eq:blowup-spectrum}
 \rho\big(G[t]\big)=t\cdot\rho\big(G\big).
\end{equation}
Equality (\ref{eq:blowup-spectrum}) holds 
because the adjacency matrix of $G[t]$ satisfies $A(G[t])=A(G)\otimes J_t$,
where $J_t$ denotes the $t\times t$ all-ones matrix and $\otimes$ is the Kronecker product.

The following result on spectral consecutive cycles was originally proved by Nikiforov \cite{Nikiforov2008}.

\begin{theorem}[Nikiforov \cite{Nikiforov2008}]\label{thm:short-cycles}
There exists a positive integer $n_1$ such that every graph $G$ of order $n\ge n_1$ satisfying
\eqref{eq:threshold} contains a cycle $C_g$ for every integer length
$3\le g\le n/320.$
\end{theorem}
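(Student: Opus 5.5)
The plan is to reduce the spectral condition \eqref{eq:threshold} to a combinatorial one, namely a large subgraph with linear minimum degree and a triangle. Then I would build the cycles directly by path-extension arguments. The constant $1/320$ is not optimised; it just absorbs the losses in the steps below.

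\textbf{Step 1 (spectral cleaning).} Let $\mathbf{x}$ be the unit Perron vector of $G$ and $\rho=\rho(G)>\sqrt{\lfloor n^2/4\rfloor}$. I would delete vertices one at a time, each time removing a vertex $v$ whose degree is below $c\,|V(H)|$ in the current graph $H$, for a small absolute $c$. Using $\rho(H-v)^2\ge \rho(H)^2-2d_H(v)\cdot(\text{const})$, or the eigenvector bound $\rho(H-v)\ge \rho(H)(1-2x_v^2)$ together with $\rho x_v\le \sqrt{d_H(v)}\,\|\mathbf{x}\|$, one checks that the quantity $\rho(H)^2-|V(H)|^2/4$ cannot decrease by more than a controlled amount per step. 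On the other hand, it is bounded above by $\rho(H)^2\le \Delta(H)\cdot|V(H)|$ by \eqref{eq:spectral-basic}. A telescoping argument then shows that the process stops at a subgraph $H$ with $m=|V(H)|\ge n/C_1$ vertices, $\delta(H)\ge c m$, and $\rho(H)\ge m/2-o(m)$. I would also track the edge excess, using $\rho\le\sqrt{2e}$ in the form $e(H)\gtrsim m^2/8$ and the local-density bound $\sum_{uv\in E}x_ux_v=\rho/2$, to keep $H$ far from bipartite.

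\textbf{Step 2 (short odd cycle).} By Nosal/Nikiforov, $\rho(G)>\sqrt{\lfloor n^2/4\rfloor}$ forces a triangle. More usefully, $\rho^2>n^2/4$ gives $\sum_v d(v)^2\ge \rho^2 n>n^3/4$, and then Nikiforov's book lemma gives a book of size $\Omega(n)$: an edge $uv$ with $|N(u)\cap N(v)|\ge \gamma n$. I would arrange this book to lie inside the cleaned graph $H$, either by running the cleaning with vertex weights or by noting that the deleted vertices carry little Perron weight.

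\textbf{Step 3 (consecutive lengths).} Fix the book edge $uv$ with common neighbourhood $S$ of linear size. Since $\delta(H)\ge cm$, a standard depth-first-search / P\'osa-type argument gives paths from $u$ to $v$ avoiding $v$ and $u$ internally, respectively, of every length $\ell$ in a range $[2,\ c' m]$. To obtain them, I would grow a path out of $S$ and close it back through $S$ using the linear common neighbourhood. Adding the edge $uv$, or routing through a vertex of $S$, then shifts the parity by one, so both parities are realised and all $g\in[3,n/320]$ follow once $c'm\ge n/320$.

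\textbf{Main obstacle.} I expect the hard part to be Step 3's claim that \emph{every} length (not just every length of one parity) is attained. Bipartite-like pieces of $H$ only supply even paths, so odd lengths must use the book edge. The quantitative bookkeeping in Step 1, ensuring that the book survives cleaning with linear size, is where the constant degrades to $1/320$.
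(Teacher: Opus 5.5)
The paper gives no proof of this statement. It is quoted from Nikiforov, whose argument the introduction describes as deleting edges of small eigencomponent. So your plan has to stand on its own, and it has a genuine gap exactly where you place the main obstacle.

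Step~3 asserts exact-length control without supplying a mechanism, and the intermediate claim as written is false. Take $K_{a,a}$ with one edge $uv$ added inside a part. It has minimum degree $a=m/2$, a book of size $a$ on $uv$, and spectral radius above $\sqrt{\lfloor m^2/4\rfloor}$, so it is a legitimate output of Steps~1--2. Yet every $u$--$v$ path other than the edge $uv$ has even length, so there are no $u$--$v$ paths of every length in $[2,c'm]$. Depth-first search and P\'osa rotations produce long paths, not a path of each prescribed length, so they cannot fill this in.

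A mechanism that does work splits into two cases.
\begin{itemize}
\item If $H[S]$ has average degree at least $cm/2$, the Erd\H{o}s--Gallai theorem gives a path inside $S$ of length about $cm/2$. Its subpaths join vertices of $S$ with every length $i\ge 1$, giving cycles of length $i+2$ through $u$.
\item Otherwise the bipartite graph between $S$ and $V(H)\setminus(S\cup\{u,v\})$ has more than $|S|(cm/2-2)$ edges. Erd\H{o}s--Gallai then gives a path in it of length about $c|S|$, whose vertices in every other position lie in $S$. Its subpaths join vertices of $S$ with every even length $2j$, avoiding $u,v$. Closing through $u$, or through $u$ and then $v$, gives cycles of lengths $2j+2$ and $2j+3$.
\end{itemize}
With the triangles of the book, this covers every length from $3$ to roughly $\min\{cm/2,\,c|S|\}$. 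You must then check that this bound, with $m\ge n/C_1$ and the available linear book size, exceeds $n/320$. The statement has a fixed constant, and your proposal never computes one.

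Steps~1 and~2 also need repair. In Step~2 the inequality is reversed: Hofmeister's bound is $\rho^2\ge\frac1n\sum_v d(v)^2$. For even $n$, the graph $K_{n/2+2}$ plus isolated vertices has $\rho(G)>\sqrt{\lfloor n^2/4\rfloor}$ but $\sum_v d(v)^2\approx n^3/8$. So the linear book has to come from an actual spectral book theorem (Nikiforov's). The clean way to put it inside $H$ is to apply that theorem to $H$ itself.

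This is legitimate because the cleaning actually \emph{increases} $f(H)=\rho(H)^2-|V(H)|^2/4$. Restricting the unit Perron vector $\mathbf{x}$ of $H$ to $V(H)\setminus\{v\}$ gives $\rho(H-v)\ge\rho(H)\frac{1-2x_v^2}{1-x_v^2}$. Combined with $\rho(H)^2x_v^2\le d_H(v)(1-x_v^2)$, this yields $\rho(H-v)^2\ge\rho(H)^2-2d_H(v)$. Hence deleting a vertex of degree below $c|V(H)|$, with $c<1/4$, raises $f$ by at least $(\frac12-2c)|V(H)|-\frac14$.

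This monotonicity is what you need, not that $f$ ``cannot decrease by more than a controlled amount'', which gives no lower bound on $m$. Together with $f(H)<\frac34m^2$, it forces $m\ge n/C_1$ and keeps $\rho(H)>\sqrt{\lfloor m^2/4\rfloor}$.
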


Denote by $S_{n,k}$ the split graph $K_k\vee\overline K_{n-k}$.
We can partition its vertex set as $\Pi: V(S_{n,k})=U\cup V$,
where $U$ is the $k$-clique and $V$ is the independent set of size $n-k$.
Clearly, the partition $\Pi$ is equitable, and thus
$\rho(S_{n,k})$ is the largest eigenvalue of the quotient matrix
\[
B_{\Pi} = \begin{bmatrix}
k-1 & n-k \\
k & 0
\end{bmatrix}.
\]
Thus, $\rho(S_{n,k})$ is the largest root of the characteristic polynomial:
\[
f(x) :=\det(xI_2-B_{\Pi})=x^2-(k-1)x-k(n-k).
\]
Consequently,
\begin{equation}\label{eq:split-spectrum}
\rho(S_{n,k})=\frac12\Big(k-1+\sqrt{4kn-3k^2-2k+1}\Big).
\end{equation}
Define $\phi(x):=\frac12\big(x+\sqrt{4x-3x^2}\big)$, where $0\le x\le1$.
We immediately obtain
\begin{equation}\label{eq:split-asymptotic}
 \frac{\rho(S_{n,k})}{n}\longrightarrow\phi(x), \quad \text{whenever} ~~ \frac kn\to x.
\end{equation}

\begin{theorem}\label{prop:sharpness}
The constant $C_0$ in Theorem \ref{thm:main} is the best possible.
\end{theorem}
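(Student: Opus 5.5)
We show the split graph $S_{n,k}=K_k\vee\overline K_{n-k}$, with $k$ chosen suitably, satisfies \eqref{eq:threshold} yet contains no cycle of length exceeding $2k+1$. Since $k\approx\frac{3-\sqrt5}{4}n$, this gives $2k\approx C_0 n$, so no constant larger than $C_0$ can work in Problem \ref{prob:main}.

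\textbf{Step 1 (no long cycles).} Every cycle in $S_{n,k}$ lies in $U\cup V$ with $V$ independent. Hence any two vertices of $V$ on the cycle are separated by at least one vertex of $U$. So a cycle contains at most $k$ vertices of $V$, and its length is at most $2k$. (If $k\ge 2$ this is attained, but we only need the upper bound.) Consequently $S_{n,k}$ has no $C_g$ for any $g>2k$.

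\textbf{Step 2 (spectral condition).} By \eqref{eq:split-spectrum}, $\rho(S_{n,k})>\sqrt{\lfloor n^2/4\rfloor}$ holds whenever $\rho>n/2$. Setting $x=n/2$ in $f$, this is equivalent to $f(n/2)<0$, i.e.
\[
\tfrac{n^2}{4}-(k-1)\tfrac n2-k(n-k)<0,\qquad\text{i.e.}\qquad k^2-\tfrac{3n}{2}k+\tfrac{n^2}{4}+\tfrac n2<0 .
\]
The roots of $k^2-\frac32nk+\frac{n^2}4=0$ are $k=\frac{3\pm\sqrt5}{4}n$. So for every fixed $\delta>0$ and all large $n$, the integer $k=\lceil(\frac{3-\sqrt5}{4}+\delta)n\rceil$ satisfies the inequality. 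The linear term $n/2$ is negligible against $\delta\cdot\Theta(n^2)$. Equivalently, by \eqref{eq:split-asymptotic}, $\phi(x)>1/2$ exactly for $x>\frac{3-\sqrt5}{4}$, because $\phi(x)=\frac12$ reduces to $4x^2-6x+1=0$.

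\textbf{Step 3 (conclusion).} Suppose some constant $C>C_0$ worked. Take $\varepsilon=\delta$ small enough that $C-\varepsilon>C_0+4\delta$. The graph $S_{n,k}$ satisfies \eqref{eq:threshold}, but it misses every length $g$ with
\[
2k+1\le g\le (C-\varepsilon)n .
\]
This range is nonempty, since $2k+1\le(C_0+2\delta)n+3<(C-\varepsilon)n$ for large $n$. This contradicts the choice of $C$, so $C\le C_0$. Combined with the first part of Theorem \ref{thm:main}, this gives $C=C_0$.

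The main point needing care is the strict inequality together with the floor in \eqref{eq:threshold}. Taking $k$ slightly above the critical value $\frac{3-\sqrt5}{4}n$ by $\delta n$ makes this routine: the quadratic margin is of order $\delta n^2$, which dominates the $O(n)$ error terms.
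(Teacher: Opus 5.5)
Your proof is correct and takes essentially the same approach as the paper: the split graph $S_{n,k}=K_k\vee\overline K_{n-k}$ with $k$ slightly above $\frac{3-\sqrt5}{4}n$ has circumference at most $2k$, which contradicts any $C>C_0$. The only differences are cosmetic: you verify $\rho(S_{n,k})>n/2$ by checking the sign of $f(n/2)$ directly, with an explicit quadratic margin, whereas the paper uses the limit $\rho(S_{n,k})/n\to\phi(k/n)$ and the monotonicity of $\phi$; and you exhibit the missing length $2k+1$ where the paper uses $2k+2$.
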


\begin{proof}
Recall that $C_0=\frac12(3-\sqrt5).$ 
Suppose to the contrary that $C_0$ is not the best possible.
Then, $C>C_0$ for the maximum constant $C$ in Problem \ref{prob:main}. 
It is easy to check that 
$$\phi'(x)=\frac12+\frac{2-3x}{2\sqrt{4x-3x^2}}>0$$
for $x\in (0,\frac12]$.
Thus, $\phi(x)$ is strictly increasing on $[0,\frac12]$.

Choose a real number $a$ such that
$\frac12C_0<a<\min\{\frac12C,\frac12\}$, and set $k=\lfloor{an}\rfloor$.  
Note that $\phi(\frac12 C_0)=\frac12$.
By \eqref{eq:split-asymptotic} and the monotonicity of $\phi(x)$, 
there exists a constant $\gamma>0$ such that
$\rho(S_{n,k})>(\frac12+\gamma)n$ for all sufficiently large $n$.  
In particular, $\rho(S_{n,k})>\sqrt{\lfloor{n^2/4}\rfloor}$.

Since $k<\frac12n$, every cycle in $S_{n,k}$ contains at most $k$ vertices of
its independent set and thus has length at most $2k$.  
Moreover, $S_{n,k}$ contains a cycle of length $2k$.
Hence, its circumference is precisely $2k$.
Choose $\varepsilon=\frac12(C-2a)>0$.  
Then $2k+2\le2an+2\leq(C-\varepsilon)n$ for all sufficiently large $n$,
but $S_{n,k}$ does not contain a copy of $C_{2k+2}$.  
This contradicts the definition of $C$ in Problem \ref{prob:main}.
Therefore, we conclude that $C=C_0$.
\end{proof}

\subsection{Exact paths in a regular pair}\label{sec:regular-pair}

Let $(U,V)$ be an $\eta$-regular pair with edge density $p$.  
Let $X\subseteq U$ and $Y\subseteq V$ satisfy $|X|\geq\eta|U|$ and $|Y|\geq\eta|V|$.
A vertex $x\in X$ is said to be \emph{typical} into $Y$, if $d_Y(x)\geq(p-\eta)|Y|$.
In turn, we say the subset $X$ is \emph{typical} into $Y$, 
if every vertex $x\in X$ is typical into $Y$.
We first state the following lemma, which is a standard consequence of regularity lemma.

\begin{lemma}\label{lem:typical}
Let $(U,V)$ be an $\eta$-regular pair with edge density $p$.  
For any subset $Y\subseteq V$ with $|Y|\ge\eta|V|$, 
the number of vertices $x\in U$ satisfying $d_Y(x)<(p-\eta)|Y|$ is fewer than $\eta|U|$.
The analogous statement with $U$ and $V$ interchanged also holds.
\end{lemma}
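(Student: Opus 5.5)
We argue by contradiction, directly from the definition of $\eta$-regularity. Let
\[
X_0=\{x\in U:\ d_Y(x)<(p-\eta)|Y|\},
\]
and suppose that $|X_0|\ge \eta|U|$.

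Since $|X_0|\ge\eta|U|$ and $|Y|\ge\eta|V|$, the pair $(X_0,Y)$ is admissible in the definition of an $\eta$-regular pair, so
\[
|d(X_0,Y)-p|<\eta, \qquad\text{and in particular}\qquad d(X_0,Y)>p-\eta .
\]
On the other hand, we count the edges between $X_0$ and $Y$ from the side of $X_0$. Every $x\in X_0$ has $d_Y(x)<(p-\eta)|Y|$. The set $X_0$ is nonempty, because $|X_0|\ge\eta|U|>0$. Summing over $x\in X_0$ therefore gives the strict inequality
\[
e(X_0,Y)=\sum_{x\in X_0} d_Y(x)<(p-\eta)|X_0||Y|,
\]
that is, $d(X_0,Y)<p-\eta$. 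This contradicts the previous display, so $|X_0|<\eta|U|$, which is the claim.

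The analogous statement with $U$ and $V$ interchanged follows by the same argument, because the definition of $\eta$-regularity is symmetric in the two sides. We fix $X\subseteq U$ with $|X|\ge\eta|U|$, let $Y_0\subseteq V$ be the set of vertices $y\in V$ with $d_X(y)<(p-\eta)|X|$, and suppose $|Y_0|\ge\eta|V|$. Regularity applied to the pair $(X,Y_0)$ gives $d(X,Y_0)>p-\eta$. Summing degrees over $Y_0$ gives $d(X,Y_0)<p-\eta$, a contradiction.

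There is no real obstacle here. The only points needing care are that $X_0$ is nonempty, so the summed inequality stays strict, and that the size hypotheses $|X_0|\ge\eta|U|$ and $|Y|\ge\eta|V|$ are exactly what the definition of an $\eta$-regular pair requires, which is why the contradiction hypothesis is phrased as $|X_0|\ge\eta|U|$. Only the lower half $d(X_0,Y)>p-\eta$ of the regularity inequality is used.
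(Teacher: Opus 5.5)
Your proof is correct and follows the paper's own argument: assume the atypical set has size at least $\eta|U|$, sum degrees to get $d(X_0,Y)<p-\eta$, and contradict the $\eta$-regularity inequality applied to $(X_0,Y)$. Your remark that $X_0\neq\varnothing$ keeps the summed inequality strict, and your explicit handling of the case with $U$ and $V$ interchanged, are harmless added care beyond the paper's brief version.
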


\begin{proof}
Otherwise, choose a subset $X\subseteq U$ of size at least $\eta|U|$,
such that every vertex $x\in X$ satisfies $d_Y(x)<(p-\eta)|Y|$.  
Then $d(X,Y)<p-\eta$, 
and thus $d(U,V)-d(X,Y)>\eta$.
This contradicts the $\eta$-regularity of $(U,V)$.
\end{proof}

The following lemma enables precise control over the length of the target structure, 
while permitting a bounded set of vertices that have been used in prior steps.

\begin{lemma}\label{lem:flex-path}
Fix parameters $0<d\leq 1$, $0<\eta\le d^2/1000$, and a positive integer $k$.  
There exists a threshold $\ell_0=\ell_0(d,\eta,k)$ such that the following holds.

Suppose that a graph $G$ contains an $\eta$-regular pair $(U,V)$ with edge density $p\ge d$, 
where both sets have size $\ell\ge \ell_0$.
Let $W$ be a subset of $U\cup V$ satisfying 
$|W\cap U|\le k$ and $|W\cap V|\le k.$
Take $x\in U\setminus W$ and $y\in V\setminus W$ such that
$d_V(x)\ge(p-\eta)\ell$ and $d_U(y)\ge(p-\eta)\ell.$

Define
\begin{equation}\label{eq:S-def}
   s^*=s^*(d,\eta,k,\ell):=\Big\lfloor{\big(1-\frac d4-\frac{10\eta}{d}\big)\ell\Big\rfloor}.
\end{equation}
Then for every choice of $s\in [s^*]$, there exists an
$x$--$y$ path of length $2s+1$ in $G$ whose vertices all belong to the set $(U\cup V)\setminus W$.
\end{lemma}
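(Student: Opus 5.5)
We build the $x$--$y$ path greedily: first a long alternating path out of $x$, then a short ``closing'' piece into $y$ that uses the regularity of $(U,V)$. Throughout, let $W'$ be the set of vertices used so far, together with $W$.

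\textbf{Step 1 (greedy extension).} We maintain a path $P=x=u_0v_0u_1v_1\cdots$ that alternates between $U\setminus W$ and $V\setminus W$. We also maintain the invariant that the current endpoint is typical into a large reservoir of unused vertices on the other side. Concretely, suppose the endpoint is $z$ and the set of unused vertices on the opposite side is $Y$, with $|Y|\ge (d/4+10\eta/d)\ell-O(k)\ge\eta\ell$. By Lemma~\ref{lem:typical}, fewer than $\eta\ell$ vertices of the opposite side fail to be typical into the unused set on $z$'s side. So as long as $z$ has at least $(p-\eta)|Y|-\eta\ell>0$ neighbours in $Y$ that are typical, we can pick such a neighbour as the next vertex. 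One must check that $z$ itself is typical into $Y$. This holds because we chose $z$ typical into the unused set at the previous step, and that set only shrinks by one vertex per step, so there is slack of about $\eta\ell$ in the degree bound.

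To make this robust, I would use Lemma~\ref{lem:typical} against a fixed family of sets: the unused sets at a few checkpoints, rather than at every step. It is cleaner still to reserve in advance a random-free ``buffer'': fix disjoint sets $U_1\subseteq U\setminus W$ and $V_1\subseteq V\setminus W$ of size about $(d/8)\ell$, set them aside, and build the long part of the path inside the rest, only requiring the endpoint to be typical into the buffers. Since the buffers never change, Lemma~\ref{lem:typical} applies to fixed sets, and the number of bad vertices is fewer than $\eta\ell$ per buffer.

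\textbf{Step 2 (closing).} Having built an $x$--$u$ path $P$ of length $2s-2$ (or $2s-4$) whose endpoint $u\in U$ is typical into $V_1$, we join $u$ to $y$ by a path of length $3$ (or $5$) through the buffers. Let $A=N(u)\cap V_1$ and $B=N(y)\cap U_1$. Both have size at least $(p-\eta)(d/8)\ell-O(k)\ge\eta\ell$, and $y$ is typical into $U_1$ by Lemma~\ref{lem:typical} applied to the fixed set $U_1$. We may also choose $U_1,V_1$ so that $x$ and $y$ are typical into them, since only $O(1)$ vertices need to be typical into fixed sets. Regularity then gives $d(A,B)\ge p-\eta>0$, so there is an edge $ab$ with $a\in A$, $b\in B$. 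The path $u\,a\,b\,y$ has length $3$, and with $P$ it gives total length $2s+1$.

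\textbf{Parameters and the main obstacle.} Short lengths $s$ (for instance $s=1$, needing a $3$-edge path $x\,a\,b\,y$) are handled by the same closing step applied directly from $x$. The length bound comes from the requirement that the greedy phase can continue as long as the unused part outside the buffers has size at least $\eta\ell+O(k)$, while the buffers use about $d\ell/4$ in total. This is where the constant $1-d/4-10\eta/d$ arises.

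The main obstacle is keeping typicality valid throughout the greedy phase as the reservoir shrinks to size about $\eta\ell$. Lemma~\ref{lem:typical} requires the target set to have size at least $\eta\ell$. Near the end, the number of exceptional vertices (fewer than $\eta\ell$) competes with $(p-\eta)|Y|$, which needs $|Y|\gtrsim \eta\ell/d$ and explains the $10\eta/d$ term. I would handle this by always choosing the next vertex typical into the fixed buffer rather than into the shrinking reservoir, so that Lemma~\ref{lem:typical} is applied only to fixed sets.

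The remaining difficulty is then to guarantee that the current endpoint still has an unused neighbour in the non-buffer part. I would get this by a regularity argument on the unused set: if the unused set $Y$ has $|Y|\ge 10\eta\ell/d$ and the endpoint had no neighbour in it, only fewer than $\eta\ell$ such vertices can exist. So instead of a pure greedy choice, I would choose each new vertex to be typical both into the buffer and into the current unused set, discarding at most $2\eta\ell$ candidates at each step, which is affordable as long as $(p-\eta)|Y|>2\eta\ell$.
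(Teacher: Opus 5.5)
Your final version is correct and is essentially the paper's proof: you greedily extend an alternating path from $x$, choose each new vertex typical both into the current unused set on the other side and into a reserved buffer, and then close at $y$ through the buffer. Lemma~\ref{lem:typical} is deterministic, so it can simply be reapplied to the current shrinking set at every step, as the paper does; the detour through fixed checkpoint sets is unnecessary, and typicality into a fixed buffer alone would not supply unused neighbours, as you noticed.

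The paper's closing is leaner. It takes the buffer $Z\subseteq N(y)\cap\bigl(U\setminus(W\cup\{x\})\bigr)$ with $|Z|=\lfloor d\ell/4\rfloor$, which is also the correct way to secure your ``$y$ typical into $U_1$'', since Lemma~\ref{lem:typical} says nothing about the specific vertex $y$. The last greedy vertex $v_{s-1}$, being typical into $Z$, then reaches $y$ via some $u_s\in Z$, with no second buffer $V_1$ and no extra regularity edge between $N(u)\cap V_1$ and $N(y)\cap U_1$.
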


\begin{proof}
Choose $\ell_0$ sufficiently large in terms of $d$, $\eta$, and $k$. Since
Since $d_U(y)\ge(p-\eta)\ell$ and $p\ge d>1000\eta$, we can choose a vertex subset 
$Z\subseteq N_G(y)\cap\bigl(U\setminus(W\cup\{x\})\bigr)$ with 
$|Z|=\lfloor{d\ell/4}\rfloor.$
For sufficiently large $\ell$, we have $|Z|\ge\eta\ell$. 

Set $ U_0:=U\setminus(W\cup Z\cup\{x\})$
and $V_0:=V\setminus(W\cup\{y\}).$
Next, we shall choose distinct vertices
$u_1,\ldots,u_{s-1}\in U_0$, $v_0,\ldots,v_{s-1}\in V_0$, and $u_s\in Z$
so that
 \begin{equation}\label{eq:flex-path-form}
   x v_0 u_1 v_1\cdots u_{s-1}v_{s-1}u_s y
 \end{equation}
 is a path in $G$.

During the construction, let $U_i=U_0\setminus\{u_1,\ldots,u_i\}$ 
and $V_i=V_0\setminus\{v_0,\ldots,v_{i-1}\}$ for each $i\in\{0,\ldots,s-1\}$.
Since $s\leq s^*$ and $\ell$ is sufficiently large,
we have
 \begin{equation}\label{eq:remaining-sets}
   |U_i|\ge \frac{8}{d}\eta\ell
   \quad \text{and} \quad 
   |V_i|\ge \frac d5\ell.
 \end{equation}
Indeed, at the last possible step the first set has size at least
 $10\eta\ell/d-k-O(1)$, while the second has size at least
 $d\ell/4+10\eta\ell/d-k-O(1)$.

{\bf {(i) Initial step of the construction}}.
Choose $v_0$.
Note that both $U_0$ and $Z$ are of sizes at least $\eta\ell$.
By Lemma \ref{lem:typical}, fewer than $\eta\ell$ vertices $v\in V$ are atypical with respect to $U_0$
 (i.e., satisfy $d_{U_0}(v)<(p-\eta)|U_0|$),
 and fewer than $\eta\ell$ vertices of $V$ are atypical with respect to $Z$.
On the other hand, the endpoint degree assumption $d_V(x)\ge(p-\eta)\ell$ implies
$d_{V_0}(x)\ge(p-\eta)\ell-k-1>2\eta\ell.$
Hence, there exists $v_0\in N_G(u_0)\cap V_0$, which is simultaneously typical into $U_0$ and $Z$.  Thus,
 \begin{equation}\label{eq:ai-good}
   d_{U_0}(v_0)\ge(p-\eta)|U_0|
   \quad \text{and} \quad 
   d_Z(v_0)\ge(p-\eta)|Z|.
 \end{equation}

{\bf (ii) Iterative step.} Choose $u_i$ and $v_i$ for all $i\in [s-1]$.
Now let $i\in [s-1],$ and suppose that $v_{i-1}\in N_G(u_{i-1})\cap V_{i-1}$ has been chosen, 
which is simultaneously typical into $U_{i-1}$ and $Z$.
 Thus, 
 \begin{equation}\label{eq:di-good}
   d_{U_{i-1}}(v_{i-1})\ge(p-\eta)|U_{i-1}|
   \quad \text{and} \quad 
   d_Z(v_{i-1})\ge(p-\eta)|Z|.
 \end{equation}
Note that $|V_{i-1}|\geq\frac d5\ell\geq\eta\ell$. By Lemma \ref{lem:typical}, 
fewer than $\eta\ell$ vertices $u\in U$ are atypical with respect to $V_{i-1}$. 
On the other hand, combining \eqref{eq:di-good} and \eqref{eq:remaining-sets}
gives $d_{U_{i-1}}(v_{i-1})\ge(p-\eta)\cdot\frac{8}{d}\eta\ell>\eta\ell$. 
Hence, there exists $u_i\in N_G(v_{i-1})\cap U_{i-1}$,
which is typical into $V_{i-1}$.  Consequently,
 \begin{equation*}
   d_{V_{i-1}}(u_i)\ge(p-\eta)|V_{i-1}|\ge\big(p-\eta\big)\frac d5\ell>2\eta\ell.
 \end{equation*}
Note that both $U_i$ and $Z$ are of sizes at least $\eta\ell$.
Combining Lemma \ref{lem:typical}
with analogous argument applied to \eqref{eq:ai-good}, 
we can assert the existence of a vertex $v_{i}\in N_G(u_i)\cap V_i$ 
that is simultaneously typical into $U_{i}$ and $Z$.  
It follows that 
 \begin{equation}\label{eq:bi-good}
   d_{U_i}(v_i)\ge(p-\eta)|U_i|
   \quad \text{and} \quad 
   d_Z(v_i)\ge(p-\eta)|Z|.
 \end{equation}

{\bf (iii) The final step of the path construction.} Choose $u_s.$
In view of \eqref{eq:bi-good}, we have $N_G(v_{s-1})\cap Z\ne\varnothing$.  
 Choose $u_s\in N_G(v_{s-1})\cap Z$.  
 Since $Z\subseteq N_G(y)$, the sequence
 \eqref{eq:flex-path-form} is an $x$--$y$ path of length $2s+1$ in $G$.
 Moreover, all of its vertices lie in $(U\cup V)\setminus W$.
\end{proof}

When building our target subgraph iteratively, 
we will repeatedly need to restrict to subsets of initially identified regular pairs 
while retaining enough regularity to continue the argument. 
The following slicing lemma guarantees this restriction is always possible, 
as long as the extracted subsets are sufficiently large relative to the original pair. 

The following lemma is standard. We include the proof here for completeness.

\begin{lemma}[Slicing]\label{lem:slicing}
Let $\eta\in(0,\frac14]$, and let $(U,V)$ be an $\eta$-regular pair with $d(U,V)\ge d>\eta.$ 
Suppose that $U'\subseteq U$ and $V'\subseteq V$ satisfy
$|U'|\ge(1-2\eta)|U|$ and $|V'|\ge(1-2\eta)|V|.$
Then, $(U',V')$ is a $2\eta$-regular pair with $d(U',V')\ge d-\eta.$
\end{lemma}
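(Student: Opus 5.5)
The plan is a direct verification from the definition of $\eta$-regularity. Two things must be checked: the lower bound on the density $d(U',V')$, and the regularity condition for $(U',V')$ with parameter $2\eta$. Both will come from comparing densities of large subsets with $d(U,V)$.

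\textbf{Density of $(U',V')$.} Since $|U'|\ge(1-2\eta)|U|\ge\eta|U|$ (here we use $\eta\le\frac14$) and similarly $|V'|\ge\eta|V|$, the pair $(U',V')$ is itself admissible in the definition of $\eta$-regularity of $(U,V)$. Hence
\[
|d(U',V')-d(U,V)|<\eta,
\]
and in particular $d(U',V')>d(U,V)-\eta\ge d-\eta$.

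\textbf{Regularity of $(U',V')$.} Take $X\subseteq U'$ and $Y\subseteq V'$ with $|X|\ge 2\eta|U'|$ and $|Y|\ge2\eta|V'|$.
\begin{enumerate}
\item Size check: $|X|\ge 2\eta(1-2\eta)|U|\ge 2\eta\cdot\frac12|U|=\eta|U|$, again using $\eta\le\frac14$. Likewise $|Y|\ge\eta|V|$.
\item Regularity of $(U,V)$ then gives $|d(X,Y)-d(U,V)|<\eta$.
\item Triangle inequality with the density estimate above gives
\[
|d(X,Y)-d(U',V')|\le|d(X,Y)-d(U,V)|+|d(U,V)-d(U',V')|<2\eta.
\]
\end{enumerate}
This is exactly $2\eta$-regularity of $(U',V')$.

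\textbf{Main obstacle.} There is essentially none. The only point needing care is the size inequality $2\eta(1-2\eta)\ge\eta$, which is equivalent to $\eta\le\frac14$. This is why the hypothesis $\eta\in(0,\frac14]$ appears. The assumption $d>\eta$ only ensures the density bound $d-\eta$ is positive; it is not needed for the argument itself.
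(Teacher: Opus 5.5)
Your proposal is correct and follows the paper's proof essentially step for step. You first get the density bound $d(U',V')\ge d-\eta$ by treating $(U',V')$ as a large subpair of $(U,V)$. You then derive $2\eta$-regularity from $2\eta(1-2\eta)\ge\eta$ and the triangle inequality through $d(U,V)$.
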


\begin{proof}
For $0<\eta\leq\frac14$, the constraints on $U'$ and $V'$ imply $|U'|\ge\eta|U|$ and $|V'|\ge\eta|V|$.
The $\eta$-regularity of $(U,V)$ then yields
$|d(U',V')-d(U,V)|<\eta,$ and hence $d(U',V')\ge d-\eta$.  
 
Let $X\subseteq U'$ and $Y\subseteq V'$  such that
$|X|\ge2\eta|U'|$ and $|Y|\ge2\eta|V'|.$
 Since $2\eta(1-2\eta)\ge\eta$, 
 we obtain $|X|\ge\eta|U|$ and $|Y|\ge\eta|V|$. 
The $\eta$-regularity of $(U,V)$ also gives
$|d(X,Y)-d(U,V)|<\eta.$
Consequently, we deduce that
$$\big|d(X,Y)-d(U',V')\big|\leq\big|d(X,Y)-d(U,V)\big|+\big|d(U',V')-d(U,V)\big|<2\eta.$$
Therefore, $(U',V')$ is $2\eta$-regular.
\end{proof}

\subsection{A new connected-matching embedding lemma}\label{sec:embedding}

A \emph{connected matching} of a graph $G$ is defined as a matching 
whose edges are all contained in a single connected component of $G$. 
A graph $G$ is said to be \emph{Eulerian},
if it admits an Euler tour, which is a closed walk that traverses every edge of $G$ exactly once.
A connected graph is 
Eulerian if and only if all vertices have even degrees.

The next lemma may be viewed as an exact-length refinement 
of the connected-matching embedding technique initiated by \L{}uczak \cite{Luczak1999} 
and developed by Figaj and \L{}uczak  \cite{FigajLuczak2007}. Unlike the previous applications, 
where the connected matching method was used to obtain cycles of prescribed asymptotic size, 
we require precise control of every admissible length and parity.

\begin{lemma}[Connected-matching embedding lemma]\label{lem:closed-walk}
Fix $0<d\leq 1$, $0<\eta\le d^2/1000$, and a positive integer $k$.  
There exists a threshold $\ell_1=\ell_1(d,\eta,k)$ such that the following holds.

Let $H$ be a graph with vertex set $[r]$ that admits a connected matching $M$ of size $t\ge1$.
Suppose that a graph $G$ contains pairwise disjoint vertex subsets $V_1,\ldots,V_r$,
each of size $\ell\ge \ell_1,$
such that for any $ab\in E(H)$, 
the pair $(V_a,V_b)$ is $\eta$-regular in $G$, with edge density $p_{ab}\geq d$.

Let $H_0$ be a connected subgraph of $H$ with $M\subseteq E(H_0)$, 
and let $H^*$ be a loopless Eulerian multigraph of size $m\leq k$ 
obtained from $H_0$ by adding parallel edges to some edges in $E(H_0)$.

Then for every choice of $s_1,\ldots, s_t\in [s^*]$,  
where $s^*$ is defined by \eqref{eq:S-def}, the host graph
$G$ contains a cycle of length $m+2\sum_{j=1}^{t}s_j$,
and thus cycles of every length $g$ satisfying $g\equiv m\pmod2$ and $m+2t\le g\le m+2ts^*.$
\end{lemma}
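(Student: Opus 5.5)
We will prove the lemma by realising an Euler tour of $H^*$ as a closed walk in $G$ that passes through distinct vertices. Each edge of the tour becomes either a single edge of $G$ or an exact-length path supplied by Lemma~\ref{lem:flex-path}.

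\textbf{Step 1: fix the tour.} Since $H^*$ is a connected loopless Eulerian multigraph with $m\le k$ edges, it has an Euler tour $T=a_0a_1\cdots a_{m-1}a_0$ of length $m$. Every edge of $H_0$, and in particular every $e_j=b_jc_j\in M$, appears in $T$ at least once. For each $j\in[t]$ we mark one occurrence of $e_j$ in $T$; this is possible because the edges of $M$ are distinct. The target cycle will have length $m+2\sum_j s_j$. To get this, each unmarked step $a_ia_{i+1}$ of $T$ is replaced by one edge of $G$ between $V_{a_i}$ and $V_{a_{i+1}}$. Each marked step $b_jc_j$ is replaced by an $x_j$--$y_j$ path of length $2s_j+1$ inside $V_{b_j}\cup V_{c_j}$. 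Each marked step thus contributes $2s_j$ extra edges, which gives the stated total length.

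\textbf{Step 2: choose the connecting vertices greedily.} We traverse $T$ and pick, for each visit $i$, a vertex $w_i\in V_{a_i}$, so at most $m\le k$ vertices are picked in total. We require:
\begin{itemize}
\item[(a)] the $w_i$ are distinct;
\item[(b)] consecutive $w_i,w_{i+1}$ are adjacent in $G$ when the step $a_ia_{i+1}$ is unmarked;
\item[(c)] for a marked step, $w_i$ and $w_{i+1}$ are typical into the opposite cluster, i.e. $d_{V_{c_j}}(w_i)\ge(p-\eta)\ell$ and symmetrically.
\end{itemize}
By Lemma~\ref{lem:typical}, each cluster contains fewer than $k\eta\ell$ vertices that are atypical with respect to one of the boundedly many clusters in question. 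A chosen vertex has at least $(d-\eta)\ell$ neighbours in the next cluster, so we can choose each $w_{i+1}$ adjacent to $w_i$, typical in all required directions, and distinct from the earlier choices. The only delicate point is closing the walk: the last step into $w_0$ needs a common neighbour condition. We handle it by choosing $w_{m-1}$ among the neighbours of $w_{m-2}$ that are also typical towards $V_{a_0}$. Since $(V_{a_{m-1}},V_{a_0})$ is regular, we can additionally require $w_{m-1}$ to be adjacent to $w_0$. This works because $w_0$ was chosen typical towards $V_{a_{m-1}}$, and regularity applied to $N(w_{m-2})$ and $N(w_0)$, each of size at least $(d-\eta)\ell\ge\eta\ell$, gives an edge; a vertex-to-set version of this argument suffices. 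Alternatively, we rotate $T$ so that it begins with a marked step, and then no closing edge is needed. We take the second route when $t\ge1$, which is always the case here.

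\textbf{Step 3: embed the exact-length paths.} We process $j=1,\dots,t$ in turn. We apply Lemma~\ref{lem:flex-path} to the pair $(V_{b_j},V_{c_j})$ with the forbidden set $W$ consisting of all previously used vertices. The main obstacle is that $|W|$ is no longer bounded by $k$: earlier paths may have used about $2s^*$ vertices of the same clusters whenever clusters repeat in $M$. This does not happen. The edges of $M$ are disjoint, so the pairs $\{b_j,c_j\}$ are pairwise disjoint, and the path for $e_j$ lives only in $V_{b_j}\cup V_{c_j}$. Hence inside $V_{b_j}\cup V_{c_j}$ the forbidden set consists only of the at most $m\le k$ connecting vertices, and Lemma~\ref{lem:flex-path} applies with its parameter $k$. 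The endpoint degree hypotheses are exactly property (c). Choosing $\ell_1\ge\ell_0(d,\eta,k)$, each $s_j\in[s^*]$ yields the required path, and the paths are pairwise disjoint. Concatenating everything gives a cycle of length $m+2\sum_j s_j$.

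\textbf{Step 4: cover the interval of lengths.} As $(s_1,\dots,s_t)$ ranges over $[s^*]^t$, the sum $\sum_j s_j$ takes every integer value in $[t,ts^*]$. Therefore every $g\equiv m\pmod 2$ with $m+2t\le g\le m+2ts^*$ is attained.
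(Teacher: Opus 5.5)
Your proposal is correct and follows the paper's proof. The common steps are: an Euler tour of $H^*$ with one marked occurrence of each edge of $M$; a greedy embedding of the unmarked steps, with the endpoints at marked steps typical into the partner cluster; and Lemma~\ref{lem:flex-path} on each marked pair, where the forbidden set has at most $k$ vertices because the edges of $M$ occupy pairwise disjoint cluster pairs. The paper embeds the $t$ connector segments between consecutive marked occurrences separately, which is the same as cutting your single pass at every marked step.

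Two details need tightening.

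\begin{enumerate}
\item Each chosen vertex only has to be typical in at most two directions: the next cluster of the tour, and the partner cluster if the incoming step is marked. So fewer than $2\eta\ell+k$ candidates are ever excluded. Your union bound of $k\eta\ell$ would not suffice, since the lemma imposes no relation between $k$ and $\eta$.
\item The cut must be placed so that the wrap-around step is marked. Rotate so the tour \emph{ends} with a marked step, or start the greedy just after a marked step. If instead $a_0a_1$ is marked, then $a_{m-1}a_0$ is unmarked and you would still need $w_{m-1}w_0\in E(G)$.
\end{enumerate}
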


\begin{proof}
Choose $\ell_1\geq\ell_0$ sufficiently large so that all greedy lower bounds below are positive. 
Take an Euler tour $W$ of $H^*$ and mark the edge occurrences corresponding to $M$.
Two marked occurrences
cannot be consecutive in $W$, because consecutive edges of a walk share a vertex whereas the marked
edges form a matching. Deleting the $t$ marked occurrences from the cyclic edge sequence of $W$ therefore
leaves $t$ nonempty walk segments.

We embed these unmarked connector segments of $W$ as pairwise vertex-disjoint paths in $G$. 
For every occurrence $a_h$ of $W$, 
we shall first define a candidate set $X_{h}\subseteq V_{a_h}$ as follows.
If the occurrence $a_h$ is an endpoint of some connector segment, 
it must admits a deleted marked edge $a_hb$. We then define
$$X_{h}:=\big\{v\in V_{a_h}:~ d_{V_b}(v)\geq(p_{a_hb}-\eta)\ell\big\},$$
i.e., every vertex in $X_{h}$ is typical into $V_b.$
If no marked edge is adjacent to this vertex occurrence, then we set $X_{h}=V_{a_h}$. 
Because marked edge occurrences are not
consecutive, every vertex occurrence is subject to at most one external endpoint condition. 
Lemma \ref{lem:typical} implies 
 \begin{equation}\label{eq:ah}
  |X_{h}|\geq(1-\eta)\ell.
 \end{equation}

Having chosen a candidate set $X_h \subseteq U_{a_h}$ for each occurrence $a_h$ of $W$, 
we then fix an anchor for $a_h$ from $X_h$, avoiding all vertices used as anchors in prior steps.
Since the total number of vertex occurrences in the Euler tour $W$ satisfies $m\le k$,
at most $k$ vertices are forbidden in any cluster $V_i$ of $G$ at each step of the embedding. 

For a present segment $a_0a_1\ldots a_q$, choose an unused anchor $v_0\in X_{0}$ for $a_0$ 
so that it is typical into $X_{1}.$ 
This is possible because $\min\{|X_{0}|,|X_{1}|\}\geq(1-\eta)\ell>\eta\ell+k$ and, by Lemma \ref{lem:typical}, 
fewer than $\eta\ell$ vertices of $V_{a_0}$ are atypical with respect to $X_1.$

Since $v_0\in X_{0}$ is typical into $X_1$, it follows from (\ref{eq:ah}) that 
$$d_{X_1}(v_0)\geq (p_{a_0a_1}-\eta)|X_1|\geq(d-\eta)(1-\eta)\ell>\eta\ell+k.$$
Furthermore, by Lemma \ref{lem:typical},
fewer than $\eta\ell$ vertices of $V_{a_1}$ are atypical with respect
to $X_2.$
Therefore,
we can choose an unused anchor $v_1\in N_G(v_0)\cap X_{1}$ for $a_1$ that is typical into $X_{2}.$ 

Suppose that for all $h\leq q-2$, an unused anchor $v_h\in N_G(v_{h-1})\cap X_{h}$ for $a_h$ has been chosen to be typical into $X_{h+1}$.  
Then we have
$$d_{X_{h+1}}(v_h)\geq(p_{a_ha_{h+1}}-\eta)|X_{h+1}|\geq(d-\eta)(1-\eta)\ell>\eta\ell+k.$$
By Lemma \ref{lem:typical}, fewer than $\eta\ell$ vertices of $V_{a_{h+1}}$ are atypical with respect to
 $X_{h+2}$. 
Therefore,
we can choose an unused anchor $v_{h+1}\in N_G(v_h)\cap X_{h+1}$ for $a_{h+1}$ that is typical into $X_{h+2}.$ 

In the final step of the current segment, the set $N_G(v_{q-1})\cap X_q$ contains at least $(d-\eta)(1-\eta)\ell$ vertices, so it
must include an unused anchor $v_q$ for $a_q$. Thus, we successfully embed this segment into $G$ as the path $v_0v_1\ldots v_q$.
Repeating the argument embeds all $t$ connector
segments into $G$ as pairwise vertex-disjoint paths.

On the other hand, 
by the definition of the endpoint candidate sets, if $ab$ is a marked edge, and its two
connector endpoints are embedded into $G$, with one anchored at $x\in V_a$ and the other at $y\in V_b$, then
$d_{V_b}(x)\geq(p_{ab}-\eta)\ell$ and $d_{V_a}(y)\geq(p_{ab}-\eta)\ell$.
Let $W_{ab}$ denote the set of all other fixed anchors chosen from $V_a\cup V_b$. 
Since the Euler tour $W$ has at most $k$ vertex occurrences, we have
$|W_{ab}\cap V_a|\leq k$ and $|W_{ab}\cap V_b|\leq k$.
Apply Lemma \ref{lem:flex-path} to $(V_a,V_b)$ with forbidden set $W_{ab}$ and prescribed integer $s_j$. 
This replaces the marked edge $ab$ by an $x-y$ path of length $2s_j+1$ 
whose vertices all lie in $(V_a\cup V_b)\setminus W_{ab}$. 
Since the $t$ marked edges form a matching, 
the $t$ replacement paths lie in disjoint pairs of clusters and are mutually vertex-disjoint.

Following the cyclic order of the Euler tour $W$, the $t$ embedding paths for unmarked connector segments 
and the $t$ replacement paths for marked edges together form a cycle. 
The embedding paths contribute exactly $m-t$ edges, 
while the replacement paths contribute $\sum_{j=1}^t(2s_j+1)$ edges.
The total length of the cycle is thus $m+\sum_{j=1}^t2s_j$.
Finally, every in the interval integer $[t,ts^*]$ can be expressed as a sum of $t$ integers in $[s^*]$, 
which gives the asserted interval.
\end{proof}

Using Lemma \ref{lem:closed-walk}, 
we can construct both even and odd cycles of consecutive lengths. 
The parity of such cycles is completely determined 
by the parity of the reduced connected subgraph $H_0$ that contains the matching $M$.

\begin{corollary}\label{cor:connected-matching}
 Let $H$ and $G$ satisfy the assumptions of Lemma \ref{lem:closed-walk}.    
 If $\ell\ge \ell_1(d,\eta,2r)$, where $r\leq s^*+1,$
 then $G$ contains a cycle $C_g$ for every even $g\in\{4,\ldots,2ts^*\}.$
\end{corollary}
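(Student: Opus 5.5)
The plan is to apply Lemma \ref{lem:closed-walk} once for every even target length, choosing the connected subgraph $H_0$ and the Eulerian multigraph $H^*$ according to which part of the range we need. Since every even $g$ must be reached, $H^*$ should have even size $m$. The simplest choice is to double every edge of a tree. Let $M=\{a_1b_1,\dots,a_tb_t\}$ be the connected matching. All of its edges lie in one component of $H$, so that component contains a tree $T$ covering the $2t$ endpoints of $M$ and containing every edge of $M$. We obtain $T$ by taking a spanning tree of the component that contains $M$ and then pruning leaves not in $V(M)$. The pruning never deletes a matching edge, because both ends of a matching edge lie in $V(M)$ and so never become removable leaves. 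Thus $T$ has at most $2t-1+(\text{number of Steiner vertices})$ edges. To keep the size bounded we only use a bounded number of matching edges at a time, and then bound $|V(T)|\le r$.

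\textbf{Key steps.} Fix an even $g\in\{4,\dots,2ts^*\}$. First choose $t'\in[t]$ minimal with $g\le 2t's^*+(\text{const})$, and let $M'\subseteq M$ consist of $t'$ edges. Let $T'$ be a minimal subtree of $H$ containing $M'$, and let $H_0=T'$. Let $H^*$ be $T'$ with every edge doubled. Then $H^*$ is a loopless Eulerian multigraph of size $m=2e(T')\le 2(r-1)<2r$, and it contains $M'\subseteq E(H_0)$. Applying Lemma \ref{lem:closed-walk} with $k=2r$ (which is where the hypothesis $\ell\ge\ell_1(d,\eta,2r)$ is used) gives every even length in the interval $[m+2t',\,m+2t's^*]$. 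Because $m$ is even, all these lengths are even.

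To reach short lengths such as $g=4$, take $t'=1$ and $T'$ a single edge $ab\in M$, so $H^*$ is a double edge with $m=2$. This gives all even $g\in[4,2+2s^*]$. For larger $g$, increase $t'$ one edge at a time. The intervals obtained for consecutive values of $t'$ overlap provided the size $m$ of each doubled tree does not jump by more than about $2s^*$. That holds because $m<2r\le 2s^*+2$, which is exactly the hypothesis $r\le s^*+1$.

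\textbf{Main obstacle.} The delicate point is the top end. With all $t$ matching edges used we only reach $m+2ts^*$, which is at least $2ts^*$, so the top is fine. The real issue is the gaps between consecutive intervals: for each $t'$ the next interval must start no later than the current one ends. I would handle this by nesting the trees, so that $T'_{t'}\subseteq T'_{t'+1}$ with matching sets added in a BFS-like order. Then $m_{t'+1}+2(t'+1)\le 2r+2t'+2\le 2s^*+2t'+4$, which is at most $m_{t'}+2t's^*+2$ whenever $s^*\ge 2$. I expect this overlap inequality, together with the degenerate case $t=1$ and the requirement that the starting lengths are small enough to reach $4$, to be the only thing needing care; everything else follows directly from Lemma \ref{lem:closed-walk}.
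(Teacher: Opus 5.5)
Your proposal is correct and follows essentially the same route as the paper. Both double the edges of a tree in the matching's component that contains the first $j$ matching edges (a single edge for $j=1$), apply Lemma \ref{lem:closed-walk} with $k=2r$, and glue the even intervals $[m_j+2j,\,m_j+2js^*]$ using $r\le s^*+1$.

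There is one small slip in your overlap chain. Bounding $m_{t'+1}$ by $2r$ loses $2$, and your stated inequality then fails at $t'=1$, where $m_1=2$. Using $m_{t'+1}=2e(T'_{t'+1})\le 2r-2$, which you already noted, gives $\min I_2\le 2r+2\le 2s^*+4=\max I_1+2$, and no nesting of the trees is needed.
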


\begin{proof}
 Let $M=\{e_1,\ldots,e_t\}$ be a connected matching of $H$,
 i.e., all edges of $M$ lie in the same component $H_0$ of $H$.
Define $M_j:=\{e_1,\ldots,e_j\}$ for $j\in [t]$.
Since the subgraph of $H_0$ induced by $M_j$ is a forest, 
there exists a spanning tree $T_j$ of $H_0$ such that $M_j\subseteq E(T_j)$.
For $j=1$, we may take $T_1$ to be the single edge $e_1$.
Note that $n(T_j)\leq n(H)=r$. 
Then $e(T_j)\le r-1$ for each $j\in [t]$.

For each $j\in [t]$, let $T_j^*$ denote the multigraph obtained by replacing every edge of $T_j$ with two parallel copies.
Then $T^*_j$ is Eulerian,
with $e(T^*_j)=2e(T_j)<2r$.
As a consequence of Lemma \ref{lem:closed-walk}, 
for every even integer $g$ belonging to the interval $I_j=[\,2e(T_j)+2j,\;2e(T_j)+2js^*\,],$
the graph $G$ contains a cycle $C_g$ of length $g$. 
In particular, when $j=1$, this interval simplifies to $[4,2+2s^*]$.

 The intervals $I_j$ for $j\in [t]$ overlap such that there are no missing even integers in their union.
 Indeed, since $j\leq e(T_j)\le r-1\leq s^*$, for $j\geq 2$ we can deduce that
 \begin{align*}
  \max I_{j-1}+2
\ge 2(j-1)+2(j-1)s^*+2
  \ge2j+2(r-1)
  \ge\min I_j.
 \end{align*}
 Finally, $\max I_t\ge2ts^*$.  
Therefore, $G$ contains a cycle $C_g$ for every even $g\in\{4,\ldots,2ts^*\}.$
\end{proof}

\begin{corollary}\label{cor:nonbipartite-odd} Let $H$ and $G$ satisfy the assumptions of
Lemma \ref{lem:closed-walk}.  
Suppose $H$ admits a connected non-bipartite subgraph $H_0$, which contains a matching $M$ of size $t$.  
If $\ell\ge\ell_1(d,\eta,2r),$ where $r\leq s^*+1$,
then $G$ contains a cycle $C_g$ for every odd $g\in \{2r,\ldots, 2ts^*\}$.
\end{corollary}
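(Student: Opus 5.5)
We mirror the proof of Corollary \ref{cor:connected-matching}, but build each Eulerian multigraph so that it contains exactly one odd cycle of $H_0$. Since $H_0$ is non-bipartite, it contains an odd cycle, and we take a \emph{shortest} one, $C$, of odd length $c\le r$. Note that $c\ge 3$, because $H$ is a simple graph.

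For each $j\in[t]$, let $M_j=\{e_1,\ldots,e_j\}$. We first choose a connected spanning subgraph $F_j$ of $H_0$ with $M_j\cup E(C)\subseteq E(F_j)$ that is unicyclic, with unique cycle $C$. To get it, start from $C$ and add the edges of $M_j$. This may create extra cycles only if some $e_i$ is a chord of $C$, and that cannot happen: a chord of a shortest odd cycle would produce a shorter odd cycle. Hence $C\cup M_j$ is $C$ plus pendant or disjoint edges, so it is a forest-plus-$C$, and we extend it greedily to a spanning connected unicyclic subgraph $F_j$ of $H_0$.

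Next, define $F_j^*$ by doubling every edge of $F_j$ not on $C$ and keeping the edges of $C$ single. Every vertex then has even degree, since each vertex of $C$ gets $2$ from $C$ plus an even contribution from the rest. $F_j^*$ is connected and loopless, so it is Eulerian. Its size is
\[
m_j=2(e(F_j)-c)+c=2e(F_j)-c,
\]
which is odd. Because $F_j$ is unicyclic on at most $r$ vertices, $e(F_j)\le r$, so $m_j\le 2r-3<2r$. Also $M_j\subseteq E(F_j)$.

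We then apply Lemma \ref{lem:closed-walk} with $k=2r$, using the hypothesis $\ell\ge\ell_1(d,\eta,2r)$, to the connected subgraph $F_j$, the Eulerian multigraph $F_j^*$, and the matching $M_j$. This gives cycles of every odd length in
\[
I_j=[\,m_j+2j,\;m_j+2js^*\,].
\]
The lemma's argument only needs marked occurrences to be non-consecutive in the Euler tour, and this holds because $M_j$ is a matching.

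It remains to check that the intervals $I_j$ cover all odd $g\in\{2r,\ldots,2ts^*\}$.
\begin{itemize}
\item \emph{Start.} We have $\min I_1=m_1+2\le 2r-1$. Since $m_1$ is odd and $2r-1$ is odd, $I_1$ contains all odd numbers from $m_1+2$ up to $m_1+2s^*\ge 2s^*+1\ge 2r-1$, using $r\le s^*+1$.
\item \emph{Overlap.} For $j\ge2$, we need $\max I_{j-1}+2\ge\min I_j$, i.e. $m_{j-1}+2(j-1)s^*+2\ge m_j+2j$. It suffices that $2(j-1)(s^*-1)\ge m_j-m_{j-1}$. Here $m_j-m_{j-1}\le 2r-3-3\le 2r$ (each $m_i\ge 3$) and $s^*-1\ge r-2$, so the inequality is immediate for $j\ge 3$, and for $j=2$ once $s^*\ge r+1$.
\item \emph{The case $j=2$ in general.} If only $r\le s^*+1$ is available, we avoid a gap by choosing the $F_j$ nested, $F_1=F_2=\cdots$: a single spanning unicyclic $F\supseteq C\cup M$ works for every $j$, since $M_j\subseteq M$ and the lemma only marks $M_j$. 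Then all $m_j$ equal a common odd value $m$, and the overlap condition reduces to $2(j-1)s^*+2\ge 2j$, which holds trivially.
\item \emph{End.} With this choice, $\max I_t=m+2ts^*\ge 2ts^*$.
\end{itemize}

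The nesting trick (one fixed $F$ containing $C\cup M$) makes the bookkeeping easy. The only real obstacle is ensuring that $C\cup M$ stays a unicyclic forest-extension, so that after doubling the non-cycle edges the parity of the total length is odd. The shortest-odd-cycle choice handles this, since it rules out matching edges that are chords of $C$.
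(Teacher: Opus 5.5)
Your proof is correct and follows the paper's route: fix an odd cycle $C$ of $H_0$, extend $C\cup M_j$ to a connected unicyclic subgraph, double the non-cycle edges to get an Eulerian multigraph of odd size below $2r$, apply Lemma \ref{lem:closed-walk} for each $j$, and chain the resulting odd intervals. Choosing $C$ shortest (hence chordless) is what guarantees the unicyclic extension exists, a point the paper asserts for an arbitrary odd cycle even though it fails when an edge of $M_j$ is a chord of $C$. Using one spanning $F$ (indeed any spanning unicyclic $F_j$ already gives $m_j=2n(H_0)-|C|$ for every $j$) makes the overlap check trivial and renders your loose $j=2$ discussion unnecessary.
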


\begin{proof} 
Let $M=\{e_1,\ldots,e_t\}$ be a matching of $H_0$. Define $M_j:=\{e_1,\ldots,e_j\}$ for $j\in [t]$.
Fix an odd cycle $C$ in $H_0$.
Building on $C$ and $M_j$,
one can easily construct a connected unicyclic subgraph $T_j\subseteq H_0$ such that $\big(M_j\cup E(C)\big)\subseteq E(T_j).$
For $j=1$, we set $T_1=C$.
Note that $n(T_j)\leq n(H)=r$. 
Then $e(T_j)\le n(T_j)\le r$ for each $j\in [t]$. 
Moreover, since there are at least $\frac12(|C|+1)$ edges in $C$ do not belong to any matching, 
we have $e(T_j)\geq |M_j\cup E(C)|\geq j+\frac12(|C|+1)$
for each $j\in [t]$.

For each $j\in [t]$, let $T_j^*$ denote the multigraph obtained from $T_j$ 
by replacing every edge in $E(T_j)\setminus E(C)$ with two parallel copies. Then $T^*_j$ is Eulerian,
with $e(T^*_j)=2e(T_j)-|C|<2r-2$.
As a consequence of Lemma \ref{lem:closed-walk}, for every odd integer $g$ 
belonging to the interval $$I_j=\big[\,2e(T_j)-|C|+2j,\;2e(T_j)-|C|+2js^*\,\big],$$ the graph $G$ contains a cycle $C_g$ of length $g$. 
In particular, for $j=1$, we have $e(T_1)=|C|$, and the corresponding interval reduces to $[\,|C|+2,|C|+2s^*\,]$.

 The intervals $I_j$ for $j\in [t]$ overlap such that there are no missing odd integers in their union.
 Indeed, since $j+\frac12(|C|+1)\leq e(T_j)\le r\leq s^*+1$, for $j\geq 2$ we can derive that
 \begin{align*}
  \max I_{j-1}+2
\ge 2(j-1)+2(j-1)s^*+3
  \ge 2j+1+2(r-1)
  \ge\min I_j.
 \end{align*}
Finally, $\max I_t\ge2ts^*$,  
so $G$ contains a cycle $C_g$ for every odd $g\in\{|C|+2,\ldots,2ts^*\}.$
Since $3\leq|C|\leq n(H_0)\leq n(H)=r$, this holds for all odd $g\in\{2r,\ldots,2ts^*\}.$
\end{proof}

\subsection{Extraction of a connected matching under a spectral condition}\label{sec:spectral-matching}

The following result is a direct consequence of the sharp theorem of Feng, Yu, and Zhang \cite{FengYuZhang2007}
on the spectral radius of a graph with prescribed matching number.

\begin{lemma}\label{thm:matching-spectrum}
Let $G$ be a graph of order $n$ with matching number $\nu$, where $n\geq2\nu$. Then
 \begin{equation}\label{eq:matching-spectrum}
  \rho(G)\le
  \max\Big\{
    2\nu,
    \frac12\Big(\nu-1+\sqrt{(\nu-1)^2+4\nu(n-\nu)}\Big)\Big\}.
 \end{equation}
\end{lemma}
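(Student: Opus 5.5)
The plan is to derive \eqref{eq:matching-spectrum} from the structure given by the Tutte--Berge formula, reduce to a two-parameter family of extremal graphs, and then show that only the two endpoints of that family matter. This is exactly the content of the theorem of Feng, Yu and Zhang \cite{FengYuZhang2007}, and we intend to invoke their result for the final optimisation rather than reprove it.

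\textbf{Reduction.} By the Tutte--Berge formula there is a set $S\subseteq V(G)$ with $|S|=s$ such that $G-S$ has $q$ odd components, where $q-s=n-2\nu$. Adding edges never decreases $\rho$ (Perron--Frobenius monotonicity), and we may add every edge that keeps $S$ a Tutte--Berge witness. So we may assume the following: every component of $G-S$ is an odd clique, there are no even components (they can be merged into an odd one), and $S$ is complete to everything. Thus $G$ is a spanning subgraph of
\[
K_s\vee\bigl(K_{n_1}\cup\cdots\cup K_{n_q}\bigr),\qquad n_i\ \text{odd},\qquad \sum_i n_i=n-s,\qquad q=n-2\nu+s .
\]
By the standard argument that moving vertices from a smaller clique into a larger one increases the spectral radius, we may further take $n_1=n-s-(q-1)=2\nu-2s+1$ and $n_2=\cdots=n_q=1$. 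Hence $\rho(G)\le\rho(G_s)$, where $G_s:=K_s\vee(K_{2\nu-2s+1}\cup\overline{K}_{n-2\nu+s-1})$ and $0\le s\le\nu$.

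\textbf{The endpoints.} For $s=0$ the graph $G_0$ is $K_{2\nu+1}$ together with isolated vertices, so $\rho(G_0)=2\nu$. For $s=\nu$ the graph $G_\nu$ is the split graph $K_\nu\vee\overline{K}_{n-\nu}=S_{n,\nu}$. Its spectral radius comes from the same $2\times2$ quotient-matrix computation as in \eqref{eq:split-spectrum}, and equals $\tfrac12\bigl(\nu-1+\sqrt{(\nu-1)^2+4\nu(n-\nu)}\bigr)$. These are exactly the two terms of the maximum in \eqref{eq:matching-spectrum}.

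\textbf{The main obstacle} is to show that $\max_{0\le s\le\nu}\rho(G_s)$ is attained at $s\in\{0,\nu\}$. The partition of $V(G_s)$ into $S$, the big clique, and the isolated part is equitable. So $\rho(G_s)$ is the largest root of an explicit cubic $f_s(x)$, the characteristic polynomial of the $3\times3$ quotient matrix. The plan is to show that, as a function of the real parameter $s$, the largest root is quasi-convex (equivalently, cannot have an interior strict maximum). To do this we would compare $f_s$ at the candidate value $x=\max\{\rho(G_0),\rho(G_\nu)\}$ and show that $f_s(x)\ge0$, with $f_s$ increasing beyond $x$, for every interior $s$. This is precisely the computation carried out in \cite{FengYuZhang2007}. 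Since their theorem states that the extremal graph is $K_{2\nu+1}\cup\overline{K}_{n-2\nu-1}$ or $K_\nu\vee\overline{K}_{n-\nu}$, we will cite it for this step, and the lemma follows.
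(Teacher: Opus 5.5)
Your proposal is correct and essentially matches the paper, which simply cites the Feng--Yu--Zhang theorem; since you also cite that theorem for the endpoint comparison, your Tutte--Berge reduction to the family $G_s$ (the standard first half of their proof) is exposition rather than a load-bearing step. The only slip is the degenerate witness with $q=0$: it forces $s=0$ and $n=2\nu$ and falls outside your family $G_s$, but that case is trivial because then $\rho(G)\le n-1<2\nu$.
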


The two terms in \eqref{eq:matching-spectrum} correspond precisely to the two extremal constructions:
the complete graph $K_{2\nu+1}$ plus $(n-2\nu-1)$ isolated vertices, 
and the split graph $S_{n,\nu}=K_\nu\vee\overline{K_{n-\nu}}$. 

Recall that a connected matching of a graph $G$ 
is defined as a matching all of whose edges lie in the same connected component of $G$.

\begin{lemma}\label{lem:matching-extraction}
 Let $c_0:=\frac14(3-\sqrt5)$.
 For every $\alpha\in(0,c_0)$, there exists $\delta=\delta(\alpha)>0$ such
 that every graph $G$ of order $n$ satisfying
$\rho(G)>(\frac12-\delta)n$
 has a connected matching of size at least $(c_0-\alpha)n$.
\end{lemma}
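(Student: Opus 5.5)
The plan is to pass from $G$ to a single connected component and then apply Lemma \ref{thm:matching-spectrum} to that component. Since $\rho(G)$ equals the maximum of $\rho(G_i)$ over the connected components $G_i$ of $G$, we may fix a component $G_1$ with $\rho(G_1)=\rho(G)>(\frac12-\delta)n$. Let $n_1=n(G_1)\le n$ and $\nu=\nu(G_1)$. Any maximum matching of $G_1$ is a connected matching of $G$, so it suffices to show $\nu\ge(c_0-\alpha)n$. Suppose instead that $\nu<(c_0-\alpha)n$. We will contradict the bound \eqref{eq:matching-spectrum} applied to $G_1$, which is legitimate because $n_1\ge2\nu$ always holds.

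First we bound each term of \eqref{eq:matching-spectrum} separately.
\begin{itemize}
\item The first term gives $2\nu<2(c_0-\alpha)n$. Since $2c_0=\frac12(3-\sqrt5)\approx0.382<\frac12$, this is below $(\frac12-\delta)n$ once $\delta<\frac12-2c_0$.
\item The second term is increasing in $n_1$ when $\nu$ is fixed, so we may replace $n_1$ by $n$. It is also increasing in $\nu$ on the range $\nu\le n/2$, because the quantity under the root, $(\nu-1)^2+4\nu(n-\nu)$, has derivative $2(\nu-1)+4n-8\nu>0$ there.
\item Hence the second term is at most its value at $\nu=(c_0-\alpha)n$, which is $\rho(S_{n,\nu})$ up to the harmless difference between $\frac12(\nu-1+\sqrt{(\nu-1)^2+4\nu(n-\nu)})$ and formula \eqref{eq:split-spectrum}. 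The two agree up to $O(1)$, since $(\nu-1)^2+4\nu(n-\nu)=4\nu n-3\nu^2-2\nu+1$ exactly.
\end{itemize}
So the second term is at most $\frac12(\nu-1+\sqrt{4\nu n-3\nu^2})+O(1)\le\phi\bigl((c_0-\alpha)\bigr)\,n+O(1)$, with $\phi$ as in \eqref{eq:split-asymptotic}.

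Next we identify the constant. The number $c_0$ is the root of $\phi(x)=\frac12$: this equation reads $\sqrt{4x-3x^2}=1-x$, i.e.\ $4x^2-6x+1=0$, whose smaller root is $x=\frac{3-\sqrt5}{4}=c_0$. This is consistent with the paper's remark that $\phi(\frac12C_0)=\frac12$. The proof of Theorem \ref{prop:sharpness} shows that $\phi$ is strictly increasing on $[0,\frac12]$, so $\phi(c_0-\alpha)<\frac12$. Set $2\delta:=\min\{\frac12-\phi(c_0-\alpha),\ \frac12-2c_0\}>0$ (reducing it further if needed). Then both terms of \eqref{eq:matching-spectrum} are at most $(\frac12-2\delta)n+O(1)$, and this is below $(\frac12-\delta)n$ for $n$ large. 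That contradicts $\rho(G_1)>(\frac12-\delta)n$.

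The main obstacle is small $n$, where the $O(1)$ term is not dominated. Here I would use the monotonicity of the exact expression rather than the asymptotics, since $\sqrt{(\nu-1)^2+4\nu(n-\nu)}\le\sqrt{4\nu n-3\nu^2}+1$. With $\nu=xn$ this gives the bound $n\phi(x)+O(1)$ with an absolute constant. Alternatively, I would allow the lemma to hold for $n\ge n_0(\alpha)$, which is all the paper needs since it is applied to reduced graphs with $r\ge m_0$ large. If a statement valid for every $n$ is required, I would shrink $\delta$ so that $(\frac12-\delta)n$ exceeds the trivial bound $\rho\le n-1$ for small $n$; the hypothesis is then vacuous for those $n$.
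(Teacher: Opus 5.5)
Your route is the paper's: restrict to a component $G_1$ with $\rho(G_1)=\rho(G)$, apply Lemma~\ref{thm:matching-spectrum} to it, and choose $\delta$ using that $\phi$ is increasing on $[0,\frac12]$ with $\phi(c_0)=\frac12$. The one real defect is how you handle the additive constant.

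\textbf{What goes wrong.} As written, you reach the contradiction only for $n\ge n_0(\alpha)$. The lemma, however, claims the conclusion for every $n$, with $\delta$ depending on $\alpha$ alone. Your patch for small $n$ is false. For any $\delta>0$ and $n\ge2$ we have $(\frac12-\delta)n<\frac n2\le n-1$, so shrinking $\delta$ can never make the hypothesis vacuous; $K_n$ satisfies it for every $n\ge2$. Falling back on $n\ge n_0(\alpha)$ weakens the statement. That weaker version would serve the applications only if you also required $m_0\ge n_0(\alpha)$ in the regularity step, which the paper does not do.

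\textbf{The fix.} The small-$n$ obstacle is illusory, and the inequality you already wrote down removes it. From $\sqrt{(\nu-1)^2+4\nu(n-\nu)}\le\sqrt{4\nu n-3\nu^2}+1$, the second term of \eqref{eq:matching-spectrum} is at most $\frac12\bigl(\nu-1+\sqrt{4\nu n-3\nu^2}+1\bigr)=n\phi(\nu/n)$. The $+1$ cancels the $-1$ exactly, so there is no $O(1)$ error at all. This is precisely the paper's step, which bounds the second term by $\frac12\bigl(\nu+\sqrt{\nu^2+4\nu(n-\nu)}\bigr)=n\phi(\nu/n)$. Then $\nu<(c_0-\alpha)n$ gives $\rho(G_1)\le\max\{2(c_0-\alpha),\phi(c_0-\alpha)\}\,n\le(\frac12-2\delta)n$ with your choice of $\delta$, for every $n$. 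The contradiction then follows with no largeness assumption on $n$.
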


\begin{proof}
Let $G_0$ be a connected component of $G$ with $\rho(G_0)=\rho(G)$, 
and let $\nu_0=\nu(G_0)$.  
Define
$\phi(x)=\frac12\big(x+\sqrt{4x-3x^2}\big).$
Following the arguments in the proof of Theorem \ref{prop:sharpness}, 
we obtain $\phi(c_0)=\frac12$ and that $\phi$ is strictly increasing on the interval $[0,c_0]$.

Suppose, for the sake of contradiction, that $\nu_0<(c_0-\alpha)n$.  
Applying Lemma \ref{thm:matching-spectrum} to $G_0$, which has order at most $n$, 
yields the upper bound $\rho(G_0)\le\max\big\{2\nu_0, n\phi(\nu_0/n)\big\}$.
Indeed, the second term in \eqref{eq:matching-spectrum} is at most
$$\frac12\Big(\nu_0+\sqrt{\nu_0^2+4\nu_0(n-\nu_0)}\Big)=n\phi\Big(\frac{\nu_0}n\Big).$$
Hence, $\rho(G_0)\le\max\big\{2(c_0-\alpha),\phi(c_0-\alpha)\big\}\,n.$
Both quantities inside the maximum are strictly smaller than $\frac12$.
Choosing $\delta>0$ so that
$\max\big\{2(c_0-\alpha),\phi(c_0-\alpha)\big\}
  \le\frac12-2\delta$
 yields $\rho(G_0)\leq(\frac12-2\delta)n$.
 This leads to a contradiction.  
 Thus, $\nu_0\ge(c_0-\alpha)n$, as desired.
\end{proof}

\section{Consecutive even cycles}\label{sec:even}

In this section, we establish the even-cycle case of Theorem \ref{thm:main}.

\begin{theorem}\label{thm:even}
 For every $\varepsilon>0$, there exists an integer $n_0=n_0(\varepsilon)$ such that every graph $G$
 of order $n\ge n_0$ satisfying \eqref{eq:threshold} contains a cycle $C_g$ for
 every even length $4\le g\le(C_0-\varepsilon)n$.
\end{theorem}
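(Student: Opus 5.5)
Fix $\varepsilon>0$ and choose constants in the order
\[
\alpha \ll \varepsilon,\qquad \delta=\delta(\alpha)\ \text{from Lemma \ref{lem:matching-extraction}},\qquad d,\eta \ll \delta,\ \eta\le d^2/1000,\qquad m_0 \ \text{large}.
\]
Apply the degree form of the regularity lemma, Theorem \ref{thm:regularity}, to $G$. This gives clusters $V_1,\dots,V_r$ of common size $\ell$, an exceptional set $V_0$, a host graph $G'$, and the reduced graph $R=R(G')$.

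\textbf{Step 1: pass the spectral radius to $G'$.} The plan is to show $\rho(G')\ge \rho(G)-o(n)$. By (iii), the graph $G-E(G')$ has maximum degree at most $(d+\eta)n$. Since the adjacency matrix splits as a sum, Weyl's inequality together with \eqref{eq:spectral-basic} gives
\[
\rho(G')\ge \rho(G)-(d+\eta)n>\bigl(\tfrac12-d-\eta\bigr)n .
\]
Next delete the vertices of $V_0$. They carry at most $\eta n^2$ incident edges, so the deleted part has spectral radius at most $\sqrt{2\eta}\,n$ by the second bound in \eqref{eq:spectral-basic}. The graph $G'[V_1\cup\cdots\cup V_r]$ is a subgraph of the blow-up $R[\ell]$, so by \eqref{eq:blowup-spectrum}
\[
\ell\,\rho(R)\ge \bigl(\tfrac12-d-\eta-\sqrt{2\eta}\bigr)n \ge \bigl(\tfrac12-\delta\bigr)r\ell ,
\]
using $r\ell\le n$. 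Hence $\rho(R)>(\frac12-\delta)r$.

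\textbf{Step 2: extract and embed a connected matching.} Lemma \ref{lem:matching-extraction} now gives a connected matching $M$ in $R$ of size $t\ge (c_0-\alpha)r$. Every edge of $R$ corresponds to an $\eta$-regular pair of density at least $d$. We then apply Corollary \ref{cor:connected-matching} with $H=R$. Its hypotheses $r\le s^*+1$ and $\ell\ge \ell_1(d,\eta,2r)$ hold once $n$ is large, because $r\le M_0$ is bounded while $\ell\ge (1-\eta)n/M_0\to\infty$. The corollary yields every even cycle length $g\in[4,2ts^*]$. It remains to estimate this range:
\[
2ts^*\ \ge\ 2(c_0-\alpha)\,r\,\bigl(1-\tfrac d4-\tfrac{10\eta}{d}\bigr)\ell-2r\ \ge\ 2(c_0-\alpha)(1-\eta)\bigl(1-\tfrac d4-\tfrac{10\eta}d\bigr)n-O(1).
\]
Since $2c_0=C_0$, choosing $\alpha,d,\eta$ small relative to $\varepsilon$ makes this at least $(C_0-\varepsilon)n$ for large $n$, which covers every even length in the statement.

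\textbf{Main obstacle.} The delicate point is Step 1, the transfer of $\rho(G)>n/2$ to $\rho(R)>(\frac12-\delta)r$. Both the edge deletion in (iii) and the removal of $V_0$ must cost only $o(n)$ in spectral radius. This is why I use the max-degree bound for the first and the $\sqrt{2e}$ bound for the second, combined through Weyl's inequality: $\rho(A+B)\le\rho(A)+\rho(B)$ for nonnegative symmetric matrices. The parameter dependence also has to be ordered with care: $\delta$ comes from $\alpha$, and $d,\eta$ are then taken small relative to $\delta$, as fixed at the start.
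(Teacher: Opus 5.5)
Your proposal is correct and follows the paper's proof essentially line by line. The shared steps are: the max-degree bound plus subadditivity to pass from $G$ to $G'$, the $\sqrt{2e}$ bound to discard edges at $V_0$, the blow-up comparison giving $\rho(R(G'))>(\frac12-\delta)r$, then Lemma \ref{lem:matching-extraction} and Corollary \ref{cor:connected-matching} with the same estimate of $2ts^*$. One small slip: for odd $n$ the hypothesis only gives $\rho(G)>\frac n2-O(1/n)$, not $\rho(G)>\frac n2$, but your strict slack $d+\eta+\sqrt{2\eta}<\delta$ absorbs this (and taking $m_0$ large is unnecessary in the even case).
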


\begin{proof}
Recall that $C_0=\frac12(3-\sqrt5)$ and $c_0=\frac12C_0$.
We may assume that $\varepsilon<C_0$, since otherwise the assertion is vacuous.  
Choose positive constants $\alpha,\theta,\zeta$ so
 small that
 \begin{equation}\label{eq:parameter-main}
   2(c_0-\alpha)(1-\theta)(1-\zeta)>2c_0-\varepsilon.
 \end{equation}

 Let $\delta=\delta(\alpha)$ be the constant supplied by Lemma \ref{lem:matching-extraction}.  
 We first choose $0<d<\min\{\frac \delta4, 2\theta, 2\varepsilon,1\}$, 
 and then choose $\eta>0$ sufficiently small so that all of the following constraints hold:
 \begin{equation}\label{eq:parameter-reg}
 \eta\le\frac{d^2}{1000},
  \quad
  \eta<\zeta,
  \quad
  \frac d4+\frac{10\eta}{d}<\min\{\theta,\varepsilon\},
  \quad
  d+\eta+\sqrt{2\eta}<\delta.
 \end{equation}
 
 We apply Theorem \ref{thm:regularity} to $G$
 with the aforementioned parameters and an arbitrary fixed integer $m_0\ge2$.  
The theorem yields a vertex partition $V(G)=V_0\cup V_1\cup\cdots\cup V_r$  
and the corresponding spanning subgraph $G'$, 
where $V_0$ is the exceptional set of size at most $\eta n$, 
and $V_1,\ldots,V_r$ are the nonexceptional regular clusters of equal size $\ell$.

Let $G''$ be the spanning subgraph of $G$ with $E(G'')=E(G)\setminus E(G')$. 
By the degree conclusion in Theorem \ref{thm:regularity}, we have $\Delta(G'')\le(d+\eta)n.$ 
In view of (\ref{eq:spectral-basic}), we know that $\rho(G'')\leq\Delta(G'')\le(d+\eta)n.$
Since $A(G)=A(G')+A(G'')$ and the spectral radius of real symmetric matrices is subadditive under matrix addition, 
it follows that
 \begin{equation}\label{eq:rho-Gprime}
 \rho(G')\ge\rho(G)-\rho(G'')\geq \rho(G)-(d+\eta)n.
 \end{equation}

Let $H'$ and $H''$ be two spanning subgraphs of $G'$,
where $E(H')=E(G'-V_0)$ and $E(H'')$ consists of all edges incident to vertices in $V_0$.  
Since $e(H'')\le|V_0|n\le \eta n^2$, 
invoking \eqref{eq:spectral-basic} yields $\rho(H'')\leq\sqrt{2e(H'')}\le\sqrt{2\eta}\,n.$
Note that $A(G')=A(H')+A(H'')$. 
In light of \eqref{eq:rho-Gprime}, we obtain
 \begin{equation}\label{eq:rho-H}
  \rho(H')\geq \rho(G')-\rho(H'')
  \ge\rho(G)-\bigl(d+\eta+\sqrt{2\eta}\bigr)n.
 \end{equation}
Note that $\sqrt{\lfloor{n^2/4}\rfloor}=\frac 12n+O(n^{-1}).$
Thus, \eqref{eq:threshold}, \eqref{eq:parameter-reg}, and
 \eqref{eq:rho-H} imply, for all sufficiently large $n$, that
 \begin{equation}\label{eq:rho-H-lower}
   \rho(H')>\sqrt{\lfloor{n^2/4}\rfloor}-\bigl(d+\eta+\sqrt{2\eta}\bigr)n>\big(\frac12-\delta\big)n.
 \end{equation}

Let $R(G')$ be the reduced graph associated with $G'$,
which is the graph with vertex set $[r]$ and edge set $\{ij: d(V_i,V_j)>0\}$.
Clearly, $H'$ is a subgraph of the $\ell$-blow-up of $R(G')$.  
In view of \eqref{eq:blowup-spectrum} and \eqref{eq:rho-H-lower}, we obtain
 \begin{equation}\label{eq:rho-H-odd}
   \ell\cdot \rho(R(G'))\ge\rho(H')>
  \big(\frac12-\delta\big)n
   \ge \big(\frac12-\delta\big)r\ell.
 \end{equation}
Thus, $\rho(R(G'))>(\frac12-\delta)r.$
By Lemma \ref{lem:matching-extraction}, 
$R(G')$ has a connected component, which contains a matching of size $t\ge(c_0-\alpha)r.$

By conclusion (v) of Theorem \ref{thm:regularity}, 
every edge $ij$ of $R(G')$ represents an $\eta$-regular pair $(V_i,V_j)$ of edge density at least $d$.  
Define $s^*=\lfloor{(1-\frac d4-\frac{10\eta}{d})\ell}\rfloor$ per \eqref{eq:S-def}.  
Conclusion (i) of Theorem \ref{thm:regularity} bounds $r$ above by
 a constant $M_0=M_0(d,\eta,m_0)$ independent of $n$, while $\ell\to\infty$ as $n$ grows.  
 Hence, for sufficiently large $n$, we have both
$\ell\ge \ell_1(d,\eta,2r)$ and $s^*\ge r-1.$
 All hypotheses of Corollary \ref{cor:connected-matching} therefore hold, and $G$
 contains a cycle $C_g$ for every even length $4\le g\le2ts^*$.

By the third inequality in \eqref{eq:parameter-reg}, we have
$1-\frac d4-\frac{10\eta}{d}>1-\theta$. Therefore, $s^*\ge(1-\theta)\ell-1.$
Recall that $t\ge(c_0-\alpha)r$. 
Together with  $r\ell=n-|V_0|\ge(1-\eta)n\ge(1-\zeta)n$, this gives
 \begin{align*}
 2ts^*\ge2(c_0-\alpha)r\cdot\bigl((1-\theta)\ell-1\bigr)\ge2(c_0-\alpha)(1-\theta)(1-\zeta)n-O(r).
 \end{align*}
 Since $r$ is bounded independently of $n$, 
 inequality \eqref{eq:parameter-main} implies that
$2ts^*>(2c_0-\varepsilon)n=(C_0-\varepsilon)n$ for all sufficiently large $n$.  
Hence, $G$ contains a cycle $C_g$ for every even length
$4\le g\le(C_0-\varepsilon)n.$ 
This completes the proof.
\end{proof}

\section{Consecutive odd cycles}\label{sec:odd}

Building on the even-cycle analysis in Section \ref{sec:even}, 
we now establish the odd-cycle case of Theorem \ref{thm:main}.
The argument adopts the same connected matching results established in Sections \ref{sec:embedding} and \ref{sec:spectral-matching}. 
In Lemma \ref{lem:closed-walk}, if the reduced connected subgraph $H_0$ 
that contains the matching $M$ is non-bipartite, we can construct an Eulerian multigraph $H^*$ of odd size $m$ 
by fixing an odd cycle in $H_0$ and doubling all remaining edges of $H_0$.
By Corollary \ref{cor:nonbipartite-odd},  
the Euler tour of $H^*$ 
yields all odd cycles of sufficiently large length. The main work is therefore the bipartite reduced case.

\subsection{A bipartite regular skeleton}

Fix parameters $0<d\leq 1$ and $0<\eta\le d^2/1000$, 
and take $\ell$ to be sufficiently large.
Let $T$ be a tree with vertex set $[r]$, where $r\geq 2$.
Suppose that a graph $G$ contains pairwise disjoint vertex subsets $V_1,\ldots,V_r$,
each of size $\ell,$
such that every pair $(V_i,V_j)$ is $\eta$-regular in $G$ with edge density $p_{ij}$,
where $p_{ij}\geq d$ when $ij\in E(T)$ and  $p_{ij}=0$ otherwise.

Given an arbitrary edge $ij\in E(T)$.
Define
$V_i(j):=\{\,x\in V_i:d_{V_j}(x)\ge(p_{ij}-\eta)\ell\,\}$,
i.e., every vertex in $V_i(j)$ is typical into 
$V_j$.
Furthermore, define $B_{ij}$ as the bipartite subgraph of $G$ 
with vertex set $V_i(j)\cup V_j(i)$ and edge set consisting of all edges between $V_i(j)$ and $V_j(i).$
By Lemma \ref{lem:typical}, for every edge $ij\in E(T)$ we have
\begin{equation}\label{eq:pair-core-size}
 \min\Big\{\big|V_i(j)\big|,\big|V_j(i)\big|\Big\}
 \ge\big(1-\eta\big)\ell.
\end{equation}

\begin{lemma}\label{lem:pair-core} 
Assume $T$ and $G$ satisfy the hypotheses above. Then for every edge $ij\in E(T)$, 
the subgraph $B_{ij}$ is $2$-connected.
\end{lemma}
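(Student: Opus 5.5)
The plan is to verify directly that $B_{ij}$ has at least three vertices and stays connected after deleting any single vertex (or none). The only tools needed are the degree bounds built into the definition of $V_i(j)$ and $V_j(i)$, the size bound \eqref{eq:pair-core-size}, and $\eta$-regularity of $(V_i,V_j)$.

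Write $X=V_i(j)$, $Y=V_j(i)$ and $p=p_{ij}\ge d$.

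\textbf{Step 1 (degrees inside $B_{ij}$).} Each $x\in X$ has $d_{V_j}(x)\ge(p-\eta)\ell$. By \eqref{eq:pair-core-size}, $|V_j\setminus Y|\le\eta\ell$, so
$$d_Y(x)\ge(p-2\eta)\ell\ge(d-2\eta)\ell.$$
Symmetrically, $d_X(y)\ge(d-2\eta)\ell$ for every $y\in Y$. Since $\eta\le d^2/1000$, for large $\ell$ we get $(d-2\eta)\ell-1\ge\eta\ell$, and $|X|,|Y|\ge(1-\eta)\ell\ge 2$. Hence $B_{ij}$ has at least three vertices.

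\textbf{Step 2 (component argument).} Let $w$ be either empty or a single vertex of $B_{ij}$. Suppose $B_{ij}-w$ is disconnected, and let $K_1,K_2$ be two distinct components. Neither component is a single vertex, since every vertex still has degree at least $(d-2\eta)\ell-1>0$ in $B_{ij}-w$. So each $K_s$ meets both sides; write $A_s=K_s\cap X$ and $B_s=K_s\cap Y$.

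Pick $x\in A_s$. All neighbours of $x$ in $Y\setminus\{w\}$ lie in $B_s$, so $|B_s|\ge(d-2\eta)\ell-1\ge\eta\ell$. Similarly, pick $y\in B_s$; all its neighbours in $X\setminus\{w\}$ lie in $A_s$, so $|A_s|\ge\eta\ell$.

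\textbf{Step 3 (regularity contradiction).} Now $A_1\subseteq V_i$ and $B_2\subseteq V_j$ both have size at least $\eta\ell$. By $\eta$-regularity of $(V_i,V_j)$,
$$d(A_1,B_2)>p-\eta\ge d-\eta>0.$$
So there is an edge between $A_1$ and $B_2$. Both endpoints lie in $X\cup Y$ and neither equals $w$, so this edge belongs to $B_{ij}-w$. It joins $K_1$ to $K_2$, contradicting that they are distinct components. Taking $w$ empty shows $B_{ij}$ is connected; taking $w$ to be a single vertex shows it has no cut vertex. Together with Step 1, $B_{ij}$ is $2$-connected.

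\textbf{Main obstacle.} The only delicate point is the bookkeeping in Step 2. One must check that the components have parts of size at least $\eta\ell$ on both sides even after removing $w$ and after discarding atypical vertices. This is exactly where the bound $\eta\le d^2/1000$ and the choice of $\ell$ large are used.
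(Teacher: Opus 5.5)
Your proof is correct and follows essentially the same route as the paper. In both, the degree bound of at least $(d-2\eta)\ell$ inside $B_{ij}$ forces every component of $B_{ij}$ (or of $B_{ij}$ minus one vertex) to have at least $\eta\ell$ vertices on each side, and two such components then give an edgeless pair of large sets, contradicting $\eta$-regularity. Your uniform treatment of $w$ (empty or a single vertex) and your explicit check that $B_{ij}$ has at least three vertices are only minor tidy-ups of the paper's two-case argument.
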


\begin{proof}
Write $D=\lfloor(d-2\eta)\ell\rfloor$.
Since $0<d\leq 1$ and $0<\eta\le d^2/1000$, we have $D\geq \eta\ell+1.$
Based on \eqref{eq:pair-core-size} and the definition of $V_i(j)$, 
every vertex in $V_i(j)$ has at least
$(p_{ij}-2\eta)\ell\ge D$
neighbors in $V_j(i)$, and the same assertion holds with $i$ and $j$ interchanged.  

 Suppose first that $B_{ij}$ is disconnected.  
 Every connected component of $B_{ij}$ meets both sides of the bipartition $V_i\cup V_j$ and contains at least $D$ vertices in each side.
Hence, two distinct components give subsets
 $X\subseteq V_i(j)$ and $Y\subseteq V_j(i)$ with $\min\{|X|,|Y|\}\geq D\ge\eta\ell$ and $e(X,Y)=0$. 
 Thus, $d(X,Y)=0$ and $|d(X,Y)-d(V_i,V_j)|=d(V_i,V_j)\geq d>\eta$.
 However, the $\eta$-regularity of the pair $(V_i,V_j)$ implies $|d(X,Y)-d(V_i,V_j)|<\eta$,
a contradiction.
Therefore, $B_{ij}$ is connected.

 Suppose next that $B_{ij}$ admits a cut vertex $v$.  
 If $v\in V_i(j)$, then
 every component of $B_{ij}-v$ contains at least $D$ vertices in $V_j(i)$
 and at least $D-1$ vertices in $V_i(j)$; the symmetric statement holds when $v\in V_j(i)$. 
Hence, two distinct components of $B_{ij}-v$ again give subsets $X\subseteq V_i(j)$ and $Y\subseteq V_j(i)$ 
with $\min\{|X|,|Y|\}\geq D-1\ge\eta\ell$ and $e(X,Y)=0$, which 
contradicts the $\eta$-regularity of $(V_i,V_j)$.  
Hence, $B_{ij}$ is connected and has no cut
vertex.
\end{proof}

\begin{lemma}\label{lem:bipartite-skeleton}
Assume $T$ and $G$ satisfy the hypotheses above. Then the graph $B=\bigcup_{ij\in E(T)}B_{ij}$
is a $2$-connected bipartite graph.  
Moreover, we have $n(B)\ge r(1-\eta)\ell.$
\end{lemma}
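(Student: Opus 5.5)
The plan is to check the three assertions separately: bipartiteness, $2$-connectivity, and the order bound. Bipartiteness and the order bound follow directly from the tree structure. The $2$-connectivity comes from Lemma \ref{lem:pair-core} together with the standard gluing fact for $2$-connected graphs.

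\emph{Bipartiteness.} Since $T$ is a tree, it has a proper $2$-colouring $c:[r]\to\{1,2\}$. Colour every vertex $x\in V_i$ that lies in $B$ with $c(i)$. By definition, every edge of $B$ lies in some $B_{ij}$ with $ij\in E(T)$. Such an edge joins a vertex of $V_i(j)\subseteq V_i$ to a vertex of $V_j(i)\subseteq V_j$, and $c(i)\neq c(j)$. So the colouring is proper and $B$ is bipartite.

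\emph{Order.} Since $r\ge 2$ and $T$ is a spanning tree on $[r]$, every $i\in[r]$ has some neighbour $j$ in $T$. Hence $V(B)\cap V_i\supseteq V_i(j)$, which has size at least $(1-\eta)\ell$ by \eqref{eq:pair-core-size}. The clusters $V_1,\dots,V_r$ are pairwise disjoint, so summing over $i$ gives $n(B)\ge r(1-\eta)\ell$.

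\emph{$2$-connectivity.} I will use the following gluing fact. If $G_1,G_2$ are $2$-connected and $|V(G_1)\cap V(G_2)|\ge 2$, then $G_1\cup G_2$ is $2$-connected. To see this, delete any vertex $v$. Each $G_i-v$ is still connected (or $v\notin G_i$). The two pieces share a vertex other than $v$, so their union is connected. The union also has at least three vertices.

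Now order the edges of $T$ as $e_1,\dots,e_{r-1}$ so that each $e_s$ shares an endpoint with some earlier $e_{s'}$, $s'<s$. This is possible by growing the tree one edge at a time from a single edge. Set $B^{(s)}=\bigcup_{s''\le s}B_{e_{s''}}$; we induct on $s$.
\begin{itemize}
\item[(a)] $B^{(1)}$ is $2$-connected by Lemma \ref{lem:pair-core}.
\item[(b)] For the inductive step, write $e_s=ij$ and $e_{s'}=ik$ with $s'<s$. Then $B_{ij}$ is $2$-connected by Lemma \ref{lem:pair-core}.
\item[(c)] The overlap satisfies $V(B_{ij})\cap V(B^{(s-1)})\supseteq V_i(j)\cap V_i(k)$. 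By \eqref{eq:pair-core-size} this has size at least $(1-2\eta)\ell\ge 2$ for large $\ell$.
\end{itemize}
The gluing fact then makes $B^{(s)}$ $2$-connected, and $B=B^{(r-1)}$.

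The only delicate point is (c): the pieces $B_{ij}$ and $B_{ik}$ use different typical sets $V_i(j)$ and $V_i(k)$ inside the same cluster $V_i$. Lemma \ref{lem:typical} shows that each of these misses fewer than $\eta\ell$ vertices of $V_i$, which guarantees the large overlap.
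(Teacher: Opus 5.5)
Your proof is correct and follows essentially the same route as the paper. Bipartiteness comes from the tree's $2$-colouring (the paper phrases it as $B\subseteq T[\ell]$), the order bound from \eqref{eq:pair-core-size}, and $2$-connectivity from gluing the $2$-connected pieces $B_{ij}$ of Lemma \ref{lem:pair-core} along overlaps $V_i(j)\cap V_i(k)$ of size at least $(1-2\eta)\ell\ge 2$. Your explicit edge ordering of $T$ makes the induction precise, and it also covers the case $e(T)=2$, which the paper's text (``if $e(T)\ge 3$'') literally skips.
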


\begin{proof}
Recall that $T[\ell]$ is the $\ell$-blow-up of the tree $T$.
Clearly, $T[\ell]$ is bipartite, and $B$ is a subgraph of $T[\ell]$.
Consequently, $B$ is bipartite as well.

We now prove that $B$ is 2-connected.
If $e(T)=1$, then $r=2$ and $B=B_{12}$.
By Lemma \ref{lem:pair-core}, $B$ is 2-connected.
If $e(T)\ge 3$, then every edge of $T$ has at least one adjacent edge.
For any two adjacent edges $ij_1,ij_2\in E(T)$, applying \eqref{eq:pair-core-size} yields
 \[
 \big|V_i(j_1)\cap V_i(j_2)\big|\ge \big|V_i(j_1)\big|+\big|V_i(j_2)\big|-\big|V_i\big|\ge\big(1-2\eta\big)\ell\ge2.
 \]
Thus, $B_{ij_1}$ and $B_{ij_2}$ have at least two common vertices, so their union 
$B_{ij_1}\cup B_{ij_2}$ is also 2-connected.
Indeed, the union of any two $2$-connected graphs with at least two common vertices remains $2$-connected: 
deleting any single vertex leaves both graphs connected, and they still share at least one common vertex. 
Induction on $e(T)$ thus confirms that $B$ is $2$-connected.

Since $V(T)=[r]$ and every vertex $i\in [r]$ is incident to at least one edge of $T$, 
it follows that $V(B)\cap V_i$ is non-empty for each $i\in [r]$. Combining \eqref{eq:pair-core-size} yields
$n(B)\geq r(1-\eta)\ell$.
\end{proof}

\subsection{Excluding large bipartite blocks}

A \emph{block} of a graph $G$ is a maximal 2-connected subgraph of $G$.
The following lemma rules out the existence of an almost-spanning bipartite block under the given spectral condition.

\begin{lemma}\label{lem:bipartite-block}
Let $n$ be sufficiently large, and let $G$ be a connected graph with at most $n$ vertices 
such that $\rho(G)>\sqrt{\lfloor{n^2/4}\rfloor}.$
Then any bolck $B$ of $G$ with $n(B)\geq \frac78n$ is non-bipartite.
\end{lemma}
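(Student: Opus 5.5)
We argue by contradiction: suppose $B$ is a bipartite block with $b:=n(B)\ge\frac78 n$. Let $S:=V(G)\setminus V(B)$ and $s:=|S|$, so $b+s=n(G)\le n$ and $s\le\frac18 n$. The plan is to reduce the spectral radius of $G$ to that of $B$ plus a small diagonal perturbation via a Schur complement, and then use the bipartite bound $\rho(B)\le\sqrt{e(B)}\le b/2$ from \eqref{eq:spectral-basic}.

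\textbf{Case $s=0$.} Then $G=B$ is bipartite on at most $n$ vertices. Hence $\rho(G)\le\sqrt{e(G)}\le\sqrt{\lfloor n^2/4\rfloor}$, since the spectrum of a bipartite graph is symmetric, so $2\rho^2\le 2e(G)$. This contradicts the hypothesis, so we may assume $s\ge1$.

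\textbf{Structure of $S$.} By the block--cutvertex structure, every component $K$ of $G-V(B)$ is adjacent to exactly one vertex $v_K\in V(B)$, which is a cut vertex. Otherwise $K$ together with two of its attachment vertices in $B$ would give a path outside $B$ joining two vertices of $B$, contradicting the maximality of the block $B$.

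\textbf{Schur complement.} Write
\[
A(G)=\begin{bmatrix}A_B & C\\ C^{\top} & A_S\end{bmatrix},
\]
and let $x=(x_B,x_S)$ be the Perron vector of $A(G)$, with $\rho=\rho(G)$. Since $\rho(A_S)\le\Delta(G[S])<s\le n/8<\rho$, the matrix $\rho I-A_S$ is invertible and positive definite. From the eigen-equation, $x_S=(\rho I-A_S)^{-1}C^{\top}x_B$, and therefore
\[
\rho x_B=(A_B+D)x_B,\qquad D:=C(\rho I-A_S)^{-1}C^{\top}.
\]
Here $x_B\neq0$, since $G$ is connected and $x$ is positive. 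So $\rho$ is an eigenvalue of the symmetric matrix $A_B+D$.

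\textbf{The matrix $D$.} By the attachment structure, $A_S$ is block diagonal over the components $K$, and the column of $C^{\top}$ indexed by $u\in V(B)$ is supported only on the components attached at $u$. Hence $D$ is diagonal, with
\[
D_{vv}=\sum_{K:\,v_K=v}c_K^{\top}(\rho I-A_K)^{-1}c_K\le\frac{s}{\rho-s},
\]
where $c_K$ is the $0/1$ vector of neighbours of $v$ in $K$, so $\|c_K\|^2\le|K|$. Using $\rho>n/2-1$ and $s\le n/8$, this gives $D_{vv}\le \frac{8s}{3n}(1+o(1))$.

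\textbf{Weyl's inequality.} Combining the above,
\[
\rho\le\rho(A_B)+\max_v D_{vv}\le\frac{n-s}{2}+\frac{8s}{3n}(1+o(1)).
\]
For $s\ge1$ and $n$ large, the right-hand side is at most $\frac n2-\frac s2+\frac{3s}{n}\le\frac n2-\frac13$. On the other hand, $\sqrt{\lfloor n^2/4\rfloor}\ge\frac12\sqrt{n^2-1}>\frac n2-\frac1n$, so $\rho<\sqrt{\lfloor n^2/4\rfloor}$, a contradiction. Therefore $B$ is non-bipartite.

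The main point needing care is the structural step that $D$ is diagonal: it must be checked that distinct components of $S$ never link two different vertices of $B$, and that components sharing a cut vertex contribute additively. This is exactly the block-maximality argument above. The estimate $s\le n/8$, coming from $n(B)\ge\frac78n$, is what keeps $\rho-s$ bounded away from $0$ relative to $n$.
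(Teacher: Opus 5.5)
Your proposal is correct and follows essentially the same route as the paper: you eliminate the components of $G-V(B)$ via a Schur complement, so that $\rho$ becomes an eigenvalue of $A(B)$ plus a diagonal matrix with entries at most $s/(\rho-s)$, and then combine Weyl's inequality with $\rho(B)\le n(B)/2\le (n-s)/2$. The differences are cosmetic: you bound the perturbation by $\max_v D_{vv}$ instead of the paper's $\sum_i\alpha_i$, and you observe that $\rho$ being an eigenvalue of $A_B+D$ already suffices for the upper bound, with no need to identify it as the spectral radius.
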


\begin{proof}
 Suppose, for the sake of contradiction, 
 that there exists a bipartite block $B$ with $n(B)\geq \frac78n$. Set $s=n(G)-n(B).$
 If $s=0$, then $G=B$ is bipartite.
 It follows that
$\rho(G)\le\sqrt{e(G)}\le\sqrt{\lfloor{n^2/4}\rfloor},$
a contradiction.  
Hence, $1\le s\le\frac18n$ and $G$ itself is not 2-connected.

Let $G_1,\ldots,G_q$ be the connected components of $G-V(B)$, where $q\geq1$. 
Evidently, each $G_i$ is connected to $B$ by at least one edge.
Furthermore, for every $i\in [q]$, vertices in $G_i$ have exactly one neighbor in $B$.   
Otherwise, a path passing through $G_i$
with two distinct endpoints in $B$ would be a $B$-ear, and the union of this ear
with $B$ would form a strictly larger $2$-connected subgraph, which contradicts the
 maximality of the block $B$.

Set $\rho:=\rho(G)$.  
If vertices in $G_i$ are adjacent to $v_i\in V(B)$, 
let $b_i$ be the $0$--$1$ column vector recording the neighbors of $v_i$ in $G_i$,
and put $n_i=n(G_i)$.  
Since $G_i$ is a proper induced subgraph of the
connected graph $G$, we have $\rho>\rho(G_i)$. Eliminating the Perron
 coordinates on the components $G_i$ gives
 \begin{equation}\label{eq:schur-effective}
  \rho=\rho\bigl(A(B)+\operatorname{diag}(\alpha_v:~v\in V(B))\bigr),
 \end{equation}
 where $\alpha_v$ is the sum of the contributions of the components attached
 at $v$, and the contribution of $G_i$ at $v_i$ is
$\alpha_i=b_i^{\mathsf T}(\rho I-A(G_i))^{-1}b_i.$
 Since $\rho I-A(G_i)$ is positive definite, its inverse has operator
 norm at most $1/(\rho-\rho(G_i))$. Note that
 $\rho(G_i)\le n_i-1$ and $\lVert b_i\rVert^2\le n_i$.
 Thus $\alpha_i\le\frac{n_i}{\rho-n_i+1}
             \le\frac{n_i}{\rho-s+1}.$
 Since $\sum_{i=1}^q n_i=s$, it follows that
 \begin{equation}\label{eq:schur-sum}
  \sum_{i=1}^q\alpha_i\le\frac{s}{\rho-s+1}.
 \end{equation}
 The restriction of a Perron vector of $G$ to $B$ is positive, so the
 eigenvalue in \eqref{eq:schur-effective} is the spectral radius of the
 effective matrix.  By Weyl's inequality, and because a bipartite graph on $n(B)$ vertices
 has spectral radius at most $\frac12n(B)$,
 \begin{equation}\label{eq:block-upper}
 \rho\le\rho(B)+\sum_{i=1}^q\alpha_i
  \le\frac{n-s}{2}+\frac{s}{\rho-s+1}.
 \end{equation}
 On the other hand,
 $\rho>\sqrt{\lfloor{n^2/4}\rfloor}>\frac{1}{2}(n-1).$
 Hence, using $s\le \frac18n$ gives $\rho-s+1>\frac{n+1-2s}{2}>\frac{3}{8}n.$
Since $n$ is sufficiently large, \eqref{eq:block-upper} therefore yields
 \[
 \rho<\frac{n-s}{2}+\frac{8s}{3n}
         \le\frac n2-\frac{5s}{12}
         \le\frac n2-\frac5{12}.
 \]
which contradicts $\sqrt{\lfloor{n^2/4}\rfloor}>\frac12n-\frac14$.
This completes the proof.
\end{proof}

\subsection{A parity-breaking lemma}

Given a graph $G$, let $B$ be a bipartite subgraph of $G$ with a fixed bipartition.  
A \emph{$B$-path} in $G$ is a path with both endpoints in $B$ and no internal vertices from $B$.  
A $B$-path $P$ is said to be \emph{parity-breaking} 
if it is of odd length with both endpoints in the same part of $B$,
or even length with its two endpoints in different parts of $B$.

\begin{lemma}\label{lem:parity-ear}
If $G$ is a $2$-connected non-bipartite graph that contains a $2$-connected bipartite subgraph $B$, 
then $G$ admits a parity-breaking $B$-path $P$ satisfying $\ell(P)\le n(G)-n(B)+1.$
\end{lemma}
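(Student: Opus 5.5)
Fix the bipartition $(X,Y)$ of $B$; it is unique because $B$ is connected. We may assume $V(G)\neq V(B)$ or that some edge of $G$ not in $B$ exists. The plan is to run an ear-type argument: find any $B$-path that is parity-breaking, and keep its length bounded by noting that a $B$-path has all its internal vertices outside $V(B)$. Any $B$-path $P$ has at most $n(G)-n(B)$ internal vertices, so $\ell(P)\le n(G)-n(B)+1$ holds automatically. The whole task is therefore to prove that \emph{some} parity-breaking $B$-path exists.

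\textbf{Step 1: extend the $2$-coloring.} Suppose, for contradiction, that no $B$-path is parity-breaking. We show that the $2$-coloring of $B$ then extends to a proper $2$-coloring of $G$, contradicting that $G$ is non-bipartite. First, a chord $uv\in E(G)\setminus E(B)$ with $u,v\in V(B)$ is a $B$-path of length $1$. Since it is not parity-breaking, $u$ and $v$ lie in different parts. Hence $G[V(B)]$ is bipartite with respect to $(X,Y)$. Next, consider a component $K$ of $G-V(B)$. Because $G$ is $2$-connected and $B$ has at least $3$ vertices (it is $2$-connected), $K$ has at least two distinct attachment vertices in $B$. By Menger's theorem (or a fan lemma), $K$ together with its edges to $B$ supplies $B$-paths between any two of its attachment points. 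This will be used in Step 2.

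\textbf{Step 2: color each component.} Pick an attachment edge $w_0b_0$ with $w_0\in K$ and $b_0\in V(B)$. Color each vertex $w\in K$ by the parity of $\mathrm{dist}_K(w_0,w)+1$, offset from the color of $b_0$. Suppose this coloring is improper somewhere. Then either $K$ contains an odd cycle, or some edge $wb$ with $b\in V(B)$ receives inconsistent colors. In either case we must produce a parity-breaking $B$-path, which means routing a walk through $K$ of the wrong parity between two attachment vertices and converting it into a path. This conversion is the main obstacle, since walks can repeat vertices and parity is not preserved when we shortcut.

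To handle it, work inside the graph $K^+$ obtained from $K$ by adding its attachment vertices and attachment edges. The key fact needed is the following. If $K^+$ together with $B$ is not bipartite-consistent, then there is an odd cycle $C$ in $G[V(B)\cup V(K)]$ meeting $K$. Using $2$-connectivity of $G$, take two vertex-disjoint paths from $C$ to $B$ (Menger), together with a path in the $2$-connected graph $B$ of prescribed parity. The union contains two $B$-paths through the two arcs of $C$. These two arcs have lengths of different parity, so one of the resulting $B$-paths is parity-breaking. Paths that hit $B$ early are truncated at their first vertex in $B$, which keeps internal vertices outside $B$. If $C$ already meets $B$, the arcs of $C$ between consecutive $B$-vertices are themselves $B$-paths (or $B$-edges), and summing parities around $C$ forces one of them to be parity-breaking.

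\textbf{Step 3: conclude.} After handling all components, $G$ would be properly $2$-colored, a contradiction. This gives existence, and the length bound follows as noted in the first paragraph.
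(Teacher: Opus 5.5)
Your overall strategy is the paper's. The length bound is automatic, and existence reduces to showing that an odd cycle $C$ forces a parity-breaking $B$-path, split according to how $C$ meets $B$. The paper splits into $q\ge2$, $q=1$ and $q=0$, where $q=|V(C)\cap V(B)|$. Your component-by-component recolouring in Steps 1--2 is harmless but unnecessary, since non-bipartiteness of $G$ already gives you an odd cycle directly.

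The gap is in your final case distinction. You send every odd cycle with $V(C)\cap V(B)\neq\varnothing$ to the summing argument, but that argument needs at least two vertices of $B$ on $C$. If $V(C)\cap V(B)=\{x\}$, the only ``arc between consecutive $B$-vertices'' is $C$ itself. That is a closed walk from $x$ to $x$, not a $B$-path, so summing parities yields no contradiction. This case really arises in your setting, for example a triangle $xw_1w_2$ with $w_1,w_2\in K$ and $x$ an attachment vertex. It must be treated separately.

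The fix is the paper's $q=1$ argument. Since $G$ is $2$-connected, $G-x$ is connected. Take a shortest path $zPy$ in $G-x$ from $V(C)\setminus\{x\}$ to $V(B)\setminus\{x\}$; by minimality its internal vertices avoid $V(B)\cup V(C)$. The two $x$--$z$ arcs of $C$ have opposite parity and meet $B$ only at $x$. Concatenating each arc with $P$ gives two $x$--$y$ $B$-paths of opposite parity, and one of them is parity-breaking.

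Equivalently, your Menger argument covers this case if you let one of the two disjoint $V(C)$--$V(B)$ paths be the trivial path at $x$. As written, however, you explicitly assigned this case to the summing argument, where it fails.

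Your Menger argument for $V(C)\cap V(B)=\varnothing$ is correct. The extra ``path in $B$ of prescribed parity'' is not needed there, since parity-breaking is defined purely through the bipartition of $B$.
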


\begin{proof}
Since all interval vertices of a parity-breaking $B$-path $P$ lie in $V(G)\setminus V(B)$, 
it follows that $n(P)\leq n(G)-n(B)+2$,
and thus $\ell(P)\le n(G)-n(B)+1.$
Therefore, it suffices to show that there exists at least one parity-breaking $B$-path in $G$.
We proceed by contradiction.
Since $G$ is non-bipartite, there exists an odd cycle $C$ in $G$.
Denote $q=|V(B)\cap V(C)|$.

We first show $q\leq1$. 
Assume for contradiction that $q\ge2$.
The vertices in $V(B)\cap V(C)$ partition $C$ into $q$ edge-disjoint $B$-paths. 
Suppose none of these $B$-paths are parity-breaking: for each such path $P_i$, 
the parity of its length exactly matches the parity of the part membership difference of its two endpoints 
in the bipartition of $B$ (i.e., $\ell(P_i) \equiv 0 \pmod{2}$ if the endpoints lie in the same part of $B$, 
and $\ell(P_i) \equiv 1 \pmod{2}$ otherwise).
Summing the length parity of all $q$ paths around the odd cycle $C$, 
the left-hand side equals $\ell(C) \equiv 1 \pmod{2}$, 
while the right-hand side sums the part membership differences of consecutive vertices 
in $V(B) \cap V(C)$ along $C$, which telescopes to $0 \pmod{2}$.
This leads to a contradiction.

Assume now that $q=1$ and 
$V(B)\cap V(C)=\{x\}$. 
Since $G-x$ is connected, we may choose a shortest path $P$ from $B-x$ to $C-x$ whose internal vertices avoid $B\cup C$. 
Let $y\in V(B)\setminus\{x\}$ and $z\in V(C)\setminus\{x\}$ be its two endpoints.  
The two $x$--$z$ paths in the odd cycle $C$
have opposite length parity. 
Joining each of them to $P$ yields two $x$--$y$ $B$-paths of opposite length parity, 
one of which is parity-breaking.

Finally, suppose that $B\cap C=\varnothing$.  
By the well-known Menger's theorem, 
there exist two vertex-disjoint $B$--$C$ paths with distinct endpoints in $B$ and distinct
endpoints in $C$, and with internal vertices outside $B\cup C$.  
Write them as $xP_1u$ and $yP_2v$, where $x,y\in V(B)$ and $u,v\in V(C)$.  
The two $u$--$v$ paths in the odd cycle $C$ have opposite parity.  
Joining them in turn to $P_1$ and $P_2$
yields two $x$--$y$ $B$-paths of opposite parity, one of which is parity-breaking.
\end{proof}

\subsection{Handling cycles with prescribed parity}

\begin{lemma}\label{lem:one-handle}
Fix parameters $0<d\le1/4$, $0<\eta\le d^2/8000$, and a positive integer $r\ge2$. 
There exists a threshold $\ell_2=\ell_2(d,\eta,r)$ such that the following holds.

Let $T$ be a tree with vertex set $[r]$ that contains a matching $M$ of size $t$. 
Suppose that a graph $G$ contains pairwise disjoint vertex subsets $V_1,\ldots,V_r,$ each of size $\ell\ge \ell_2,$ 
such that for any $ij\in E(T)$, the pair $(V_i,V_j)$ is $\eta$-regular in $G$, with edge density $p_{ij}\geq d$.
Let $P^*$ be a path in $G$ with vertex set $V(P^*)\subseteq \cup_{i\in[r]}V_i$, 
whose distinct endpoints are $x\in V_a$ and $y\in V_b$, such that
 
(i)  $|V(P^*\!-\!\{x,y\})\cap \,V_i|\le\eta\ell$ for every $i\in [r]$;

(ii) there are edges $aa',bb'\in E(T)$ such that $\min\{d_{V_{a'}}(x),d_{V_{b'}}(y)\}\ge(d-\eta)\ell;$

(iii) no edge of $M$ is incident to $a$ or $b$;

(iv) $\ell(P^*)+\operatorname{d}_T(a,b)$ is odd, 
where $d_T(a,b)$ denotes the distance between $a$ and $b$ in $T$.
 
Define
 \begin{equation}\label{eq:SP-def}
  \ell^*:=\Big\lceil{\big(1-2\eta\big)\ell}\Big\rceil
  ~~\text{and} ~~
  S^*:=\Big\lfloor{\big(1-\frac d8-\frac{40\eta}{d}\big)\ell^*}\Big\rfloor.
 \end{equation}
Then there is a positive integer $m\le2r$ such that the
 union of $P^*$ and the edges within the specified $\eta$-regular pairs contains a cycle $C_g$ for every odd length $g$ in
 \begin{equation}\label{eq:one-handle-interval}
  \big[\,\ell(P^*)+m+2t,\ \ell(P^*)+m+2tS^*\,\big].
 \end{equation}
\end{lemma}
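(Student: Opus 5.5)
The approach is to reduce to the machinery of Lemma \ref{lem:closed-walk}, with the path $P^*$ playing the role of one "connector segment" that closes an open walk in $T$. We first delete the interior vertices of $P^*$ from every cluster. By (i) each cluster loses at most $\eta\ell$ vertices. We then trim every $V_i$ to a subset $V_i'$ of size exactly $\ell^*$ avoiding $V(P^*)\setminus\{x,y\}$ (and avoiding $x,y$ themselves except where they will serve as anchors). By the Slicing Lemma \ref{lem:slicing}, every pair $(V_i',V_j')$ with $ij\in E(T)$ is $2\eta$-regular of density at least $d-\eta\ge d/2$. With the new parameters $d'=d/2$ and $\eta'=2\eta$ we have $\eta'\le d'^2/1000$ because $\eta\le d^2/8000$. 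The quantity $s^*(d',\eta',k,\ell^*)$ of \eqref{eq:S-def} equals $\lfloor(1-\frac d8-\frac{40\eta}{d})\ell^*\rfloor=S^*$, which explains the definition \eqref{eq:SP-def}.

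Next we build the reduced walk. Let $Q$ be the unique $a$--$b$ path in $T$, of length $d_T(a,b)$. Consider the multigraph $H^*$ obtained from $T$ by doubling every edge not on $Q$ and keeping the edges of $Q$ single. In $H^*$ every vertex has even degree except $a$ and $b$ (when $a\ne b$), so $H^*$ has an Euler trail $W$ from $a$ to $b$ of length $m:=2(r-1)-d_T(a,b)\le 2r$. If $a=b$, the walk is a closed walk through $a$, and we treat it as a trail starting and ending at $a$. As in Lemma \ref{lem:closed-walk}, we mark one occurrence of each edge of $M$. By (iii), no marked edge is incident to $a$ or $b$, so the first and last edges of $W$ are unmarked. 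Marked edges are never consecutive, so deleting them leaves $t+1$ connector segments: the first one starts at $a$ and the last one ends at $b$. We would also like the first edge of $W$ to be $aa'$ and the last to be $b'b$. We arrange this by choosing the Euler trail to start along $aa'$, possibly reordering, and by noting that any edge at $a$ works provided we use the typicality from (ii) only through the first neighbour.

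The embedding then closes the cycle through $P^*$. We embed the connector segments greedily as in the proof of Lemma \ref{lem:closed-walk}, inside the clusters $V_i'$. The first segment starts at the anchor $x$ for the occurrence $a$, and the last ends at $y$. Condition (ii) ensures $x$ has at least $(d-\eta)\ell-3\eta\ell$ neighbours in $V_{a'}'$, enough to pick the next anchor typical into the following candidate set. The last step into $y$ is handled symmetrically: we fix the anchor for the occurrence of $b'$ preceding $b$ among the neighbours of $y$, chosen typical backward, exactly as the final vertex of a segment is chosen. Each marked edge is then replaced, via Lemma \ref{lem:flex-path} on the sliced pair, by a path of length $2s_j+1$ with $s_j\in[S^*]$, avoiding the at most $2r$ other anchors. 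Concatenating the segments, the flexible paths and $P^*$ gives a cycle of length
\[
\ell(P^*)+m+2\sum_{j=1}^t s_j .
\]
This length is odd because $m\equiv d_T(a,b)$ and by (iv). Letting $\sum_j s_j$ range over $[t,tS^*]$ yields the interval \eqref{eq:one-handle-interval}.

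The main obstacle is controlling the endpoints. The anchors $x$ and $y$ are prescribed rather than chosen typical, and the greedy embedding adjacent to them needs degree into the correct neighbouring cluster. This is why hypothesis (ii) is used, and why the Euler trail must leave $a$ through $a'$ and enter $b$ through $b'$. If forcing this order proves awkward, the fallback is to prepend the edge $aa'$ and append $b'b$ as extra doubled excursions. This changes $m$ by an even amount, keeps $m\le 2r$ after a tighter count, and preserves the parity.
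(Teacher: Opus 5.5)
Your overall architecture matches the paper's: residual clusters of size $\ell^*$ via Lemma \ref{lem:slicing}, Lemma \ref{lem:flex-path} with parameters $(d/2,2\eta,2r,\ell^*)$ giving exactly $S^*$, greedy connector embedding anchored at $x$ and $y$, and parity from bipartiteness of $T$. The construction of the walk $W$, however, has a genuine gap.

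\textbf{Where the primary construction fails.} Hypothesis (ii) only says that $x$ has many neighbours in $V_{a'}$. Since $x$ is prescribed, it may have no neighbours at all in $V_c$ for another neighbour $c$ of $a$ in $T$. So the remark that ``any edge at $a$ works'' is false: the first edge of $W$ really must be $aa'$, and the last must be $b'b$.

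An Euler trail of your $H^*$ ($a$--$b$ path $Q$ single, everything else doubled) cannot always be chosen this way. Every edge of $Q$ is a bridge of $H^*$. So if $a'$ is the neighbour of $a$ on $Q$ and $\deg_T(a)\ge 2$, any Euler trail from $a$ must exhaust the doubled branches at $a$ before crossing $aa'$. For example, take $T$ the path $c_1c_2aa'b$, $M=\{c_1c_2\}$ and $b'=a'$. The only Euler trail is $a\,c_2\,c_1\,c_2\,a\,a'\,b$, whose first edge is $ac_2$. The same obstruction occurs at $b$, and also when $a=b$, $a'=b'$ and $\deg_T(a)\ge2$.

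\textbf{Where the fallback falls short.} Adding excursions $a\,a'\,a$ and $b\,b'\,b$ does restore the correct first and last edges. But the bound $m\le 2r$, which is part of the statement, is not justified. Taken literally, the fallback gives $m=2r+2-d_T(a,b)$, which exceeds $2r$ whenever $d_T(a,b)\le 1$. In the case $a'=b$, $b'=a$ with $\deg_T(a),\deg_T(b)\ge2$, both excursions are genuinely needed and you get $2r+1$. The ``tighter count'' you invoke is never specified.

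\textbf{The fix, as in the paper.} Take the Euler trail between $a'$ and $b'$ rather than between $a$ and $b$. Use single copies on the $a'$--$b'$ path of the minimal subtree $T_M$ spanning $a'$, $b'$ and $V(M)$, and double copies of all other edges of $T_M$ (using all of $T$ works equally well). Then adjoin $aa'$ at the start and $b'b$ at the end. This gives:
\begin{itemize}
\item the first and last edges are forced to be $aa'$ and $b'b$, and they are unmarked by (iii);
\item $m\le e(H^*)+2\le 2e(T_M)+2\le 2r$;
\item since $W$ is an $a$--$b$ walk in the bipartite graph $T$, $m\equiv d_T(a,b)\pmod 2$, so (iv) gives odd cycle lengths.
\end{itemize}
With this replacement, the rest of your argument goes through as written.
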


\begin{proof}
Choose $\ell_2$ sufficiently large in terms of $d$, $\eta$, and $r$. 
For every $i\in [r]$ and every cluster $V_i$, 
we shall first extract a residual cluster $V^*_i\subseteq (V_i\setminus V(P^*\!-\!\{x,y\}))$, 
ensuring that the regularity persists from every regular pair $(V_i,V_j)$ to its residual pair $(V^*_i,V^*_j)$. 
Condition (i) yields 
$|V_i\setminus V(P^*\!-\!\{x,y\})|\ge(1-\eta)\ell>\ell^*$ for every $i\in [r]$. 
Recall from (\ref{eq:SP-def}) that $\ell^*=\lceil{(1-2\eta)\ell}\rceil$.
 Then for each $i$, we can successfully select a set $V^*_i\subseteq (V_i\setminus V(P^*\!-\!\{x,y\}))$ of size $\ell^*$.

Given an arbitrary edge $ij\in E(T)$.
Recall that the pair $(V_i,V_j)$ is an $\eta$-regular pair with edge density $d(V_i,V_j)\ge d>\eta.$
Set $p_{ij}^*:=d(V^*_i,V^*_j)$.
By Lemma \ref{lem:slicing}, 
the pair $(V^*_i,V^*_j)$ is a $2\eta$-regular pair with edge density $p_{ij}^*\ge d-\eta\ge \frac12d$.  
Note that $|V_i\setminus V^*_i|=\ell-\ell^*\le 2\eta\ell$ for each $i\in [r]$.
Combining condition (ii) yields
 \begin{equation}\label{eq:port-neighbors}
  \min\big\{d_{V^*_{a'}}(x),\,d_{V^*_{b'}}(y)\big\}\ge 
  \min\big\{d_{V_{a'}}(x),\,d_{V_{b'}}(y)\big\}-2\eta\ell\ge (d-3\eta)\ell>2\eta\ell^*+2r.
 \end{equation}

Let $T_M$ be the minimal subtree of $T$ that contains the vertices $a'$, $b'$, 
and all endpoints of the $t$ edges in the matching $M$.  
Form a loopless multigraph $H^*$ on $V(T_M)$ 
by taking one copy of every edge on the $a'$--$b'$ path in $T_M$ and two copies of every other edge of $T_M$. 
If $a'\ne b'$, exactly $a'$ and $b'$ have odd degree in $H^*$; if $a'=b'$, every degree is even. 
Thus, $H^*$ has an Euler trail from $a'$ to $b'$ (an Euler tour when $a'=b'$). 
Adjoin the edge $aa'$ at the beginning and the edge $b'b$ at the end. This gives an $a$--$b$ walk $W$ of length
 \begin{equation}\label{eq:mj-bound}
  m\le e(H^*)+2\le2e(T_M)+2=2n(T_M)\leq 2n(T)=2r.
 \end{equation}
 Every edge of $M$ occurs in $W$ at least once.  Mark one occurrence of
 each edge in $M$.  
 Since the marked underlying edges form a matching, 
 any two marked occurrences are not consecutive in $W$.  
 Condition (iii) ensures that the first and last edge occurrences of $W$ are unmarked. 
 Deleting all marked occurrences splits $W$ into $t+1$ nonempty walk segments
 (if $a=b$, then $W$ is a closed walk that decomposes into $t$ nonempty walk segments).

Regardless of whether $a=b$, we can embed these segments as $t+1$ pairwise vertex-disjoint connector paths 
that link the residual clusters $\{V^*_i\}_{i\in [r]}$, 
with the first vertex occurrence $a$ of $W$ anchored at the fixed vertex $x\in V^*_a$ 
 and the last vertex occurrence $b$ of $W$ anchored at the fixed vertex $y\in V^*_b$.
For every other occurrence $a_h$ of $W$, 
we shall first define a candidate set $X_{h}\subseteq V^*_{a_h}$ as follows. 
If the occurrence $a_h$ is an endpoint of some connector segment, 
it must admits a deleted marked edge $a_ha_h'$. We then define
$$X_{h}:=\Big\{v\in V^*_{a_h}:~ d_{V^*_{a_h'}}(v)\geq(p^*_{a_ha_h'}-2\eta)\ell^*\Big\},$$
i.e., every vertex in $X_{h}$ is typical into $V^*_{a_h'}.$
If no marked edge is adjacent to this vertex occurrence, then we set $X_{h}=V^*_{a_h}$. 
Because marked edge occurrences are not
consecutive, every vertex occurrence is subject to at most one external endpoint condition. 
Recall that $|V^*_i|=\ell^*$ for each $i\in [r]$, 
and $(V^*_i,V^*_j)$ is a $2\eta$-regular pair with edge density $p_{ij}^*$ for any $ij\in E(T)$.  
Lemma \ref{lem:typical} implies 
 \begin{equation}\label{eq:ah-U}
  |X_{h}|\geq(1-2\eta)\ell^*.
 \end{equation}

Having chosen a candidate set $X_{h}\subseteq V^*_{a_h}$ for every internal occurrence $a_h$ of $W$,
we then fix an anchor for $a_h$ from $X_{h}$, 
avoiding all vertices used as anchors in prior steps.
In view of (\ref{eq:mj-bound}), 
the total number of vertex occurrences in $W$ is at most $2r+1.$
Thus, at most $2r$ vertices are forbidden in any residual cluster $V^*_i$ of $G$ at each step of the embedding. 

For a connector segment $a_0a_1\ldots a_q$ with no anchored end,
choose an anchor $v_0\in X_0$ for $a_0$ so that it is typical into $X_1$.   
This is possible because $\min\{|X_{0}|,|X_{1}|\}\geq(1-2\eta)\ell^*>2\eta\ell^*+2r$ and, 
by Lemma \ref{lem:typical}, 
fewer than $2\eta\ell^*$ vertices of $V^*_{a_0}$ are atypical with respect
to $X_1.$

Recall that $p^*_{ij}\ge d-\eta$ for each edge $ij\in E(T)$.
Since $v_0\in X_{0}$ is typical into $X_1$, 
it follows from (\ref{eq:ah-U}) that 
$$d_{X_1}(v_0)\geq (p^*_{a_0a_1}-2\eta)|X_1|\geq(d-3\eta)(1-2\eta)\ell^*>2\eta\ell^*+2r.$$
Furthermore, by Lemma \ref{lem:typical},
fewer than $2\eta\ell^*$ vertices of $V^*_{a_1}$ are atypical with respect
to $X_2.$
Therefore,
we can choose an unused anchor $v_1\in N_G(v_0)\cap X_{1}$ for $a_1$ that is typical into $X_{2}.$ 

Suppose that for all $h\leq q-2$, an unused anchor $v_h\in N_G(v_{h-1})\cap X_{h}$ for $a_h$ has been chosen to be typical into $X_{h+1}$.  
Then we have
$$d_{X_{h+1}}(v_h)\geq(p^*_{a_ha_{h+1}}-2\eta)|X_{h+1}|\geq(d-3\eta)(1-2\eta)\ell^*>2\eta\ell^*+2r.$$
By Lemma \ref{lem:typical}, fewer than $2\eta\ell^*$ vertices of $V^*_{a_{h+1}}$ are atypical with respect to
 $X_{h+2}$. 
Therefore,
we can choose an unused anchor $v_{h+1}\in N_G(v_h)\cap X_{h+1}$ for $a_{h+1}$ that is typical into $X_{h+2}.$ 

In the final step of the current segment, 
the neighborhood $N_G(v_{q-1})\cap X_q$ contains at least $(d-3\eta)(1-2\eta)\ell^*$ vertices, so it
similarly include an unused anchor $v_q$ for $a_q$. 
Thus, we successfully embed this segment into $G$ as the path $v_0v_1\ldots v_q$.

We now embed the first and the last connector segments of $W$.
The first segment starts with an occurrence of $a$, which has been anchored at the fixed vertex $x\in V^*_a$. 
 The subsequent occurrence in this segment is $a'$, and its candidate set is $X_{a'}\subseteq V^*_{a'}$.  
In view of \eqref{eq:port-neighbors}, the neighborhood
$N_G(x)\cap V^*_{a'}$ has more than $2\eta\ell^*+2r$ vertices.  
By Lemma \ref{lem:typical}, 
fewer than $2\eta\ell^*$ vertices of $V^*_{a'}$ are atypical with respect to the next candidate set 
when the segment has another edge occurrence.
Therefore,
we can choose an unused anchor $v_{a'}\in N_G(x)\cap X_{a'}$ for $a'$ that is typical into the next candidate set.
The remainder of this segment is then embedded by the preceding induction step.  
The last segment is embedded in the reverse direction,
starting with the last occurrence of $b$,
via the same argument as for the first segment.

Next we replace $t$ marked edges of $W$ with pairwise vertex-disjoint paths that link the residual clusters $\{V^*_i\}_{i\in [r]}$.
Assume that $pq$ is a marked edge, and its two connector endpoints are embedded into $G$, 
with one anchored at $u\in V^*_p$ and the other at $v\in V^*_q$.
Then by the definition of the endpoint candidate sets, we have
$d_{V^*_q}(u)\geq(p^*_{pq}-2\eta)\ell^*$ and $d_{V^*_p}(v)\geq(p^*_{pq}-2\eta)\ell^*$.
Let $W_{pq}$ denote the set of all other fixed anchors chosen from $V^*_p\cup V^*_q$. 
Since $W$ has at most $2r+1$ vertex occurrences,
we have $|W_{pq}\cap V^*_p|\leq 2r$ and $|W_{pq}\cap V^*_q|\leq 2r$.
Recall that $p_{ij}^*\ge \frac12d$ for each $ij\in E(T)$.  
Apply Lemma \ref{lem:flex-path} to $(V^*_p,V^*_q)$ 
by setting the forbidden set to $W{pq}$ and choosing the parameters
$(d^*,\eta^*,k^*,\ell^*)=(\frac12d,2\eta,2r,\lceil{(1-2\eta)\ell}\rceil)$.
Then by (\ref{eq:S-def}) and (\ref{eq:SP-def}), we have 
$$s^*=s^*(d^*,\eta^*,k^*,\ell^*)=\Big\lfloor\big(1-\frac d8-\frac{40\eta}{d}\big)\ell^*\Big\rfloor=S^*.$$
Moreover, the assumption $\eta\le d^2/8000$ ensures $\eta^*\le {d^*}^2/1000$. 
Therefore, for every $s\in [S^*]$, the marked edge occurrence $pq$
can be replaced in $G$ by a $u$--$v$ path of length $2s+1$ whose
internal vertices all lie in $(V^*_p\cup V^*_q)\setminus W_{pq}$.  
Since the marked edges form a matching $M$ of size $t$, 
these $t$ replacement paths lie in disjoint pairs of clusters and
are mutually vertex-disjoint.

For every $j\in [t]$, we assign the $j$-th replacement path a length of $2s_j+1$, where $s_j\in [S^*]$.
Following the order of the Euler trail $W$,
the $t+1$ embedding paths for unmarked connector segments 
and the $t$ replacement paths for marked edges together form a single $x$--$y$ path $P$.
The embedding paths contribute exactly $m-t$ edges, 
while the replacement paths contribute $\sum_{j=1}^{t}(2s_j+1)$ edges.
The total length of $P$ is thus $m+\sum_{j=1}^t2s_j$.

By the definition of the residual clusters $\{V^*_i\}_{i\in [r]}$, 
the two paths $P$ and $P^*$ are internally vertex-disjoint. 
Consequently, their union 
$P\cup P^*$ forms a cycle $C_g$ of length $\ell(P^*)+m+\sum_{j=1}^t 2s_j.$
 Since the tree $T$ is bipartite, the length $m$ of the $a$--$b$ walk $W$  has the same parity as
 the distance $\operatorname{d}_T(a,b)$.  
 Condition (iv) therefore forces the length $g$ to be odd.  
 Finally, every integer in the interval $[t,tS^*]$ can be expressed as a sum of 
$t$ integers from $[S^*]$, which gives the entire interval \eqref{eq:one-handle-interval}.
\end{proof}

\subsection{Proof for the case of consecutive odd-cycles}

\begin{theorem}\label{thm:odd}
 For every $\varepsilon>0$, there exists an integer $n_0=n_0(\varepsilon)$ such that every graph $G$
 of order $n\ge n_0$ satisfying \eqref{eq:threshold} contains a cycle $C_g$ for
 every odd length
 $3\le g\le(C_0-\varepsilon)n.$
\end{theorem}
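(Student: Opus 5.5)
The plan is to combine three ranges of odd lengths. Nikiforov's Theorem \ref{thm:short-cycles} covers the short lengths. A regularity-plus-connected-matching argument covers the long lengths; it splits into two cases according to whether the relevant reduced component is bipartite.

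\textbf{Setup.} Set up parameters exactly as in the proof of Theorem \ref{thm:even}, with one change. After obtaining $\delta(\alpha)$ from Lemma \ref{lem:matching-extraction}, shrink $\delta$ further so that $4\delta<1/640$ and $2\delta<1/16$. This is harmless, since the lemma remains true for smaller $\delta$. Then choose $d\le 1/4$ and $\eta\le d^2/8000$ satisfying \eqref{eq:parameter-reg}, and with $S^*$ (from \eqref{eq:SP-def}) also obeying $S^*\ge(1-\theta)\ell-1$. Apply Theorem \ref{thm:regularity}. Arguing as in \eqref{eq:rho-H-odd}, we get $\rho(R(G'))>(\frac12-\delta)r$. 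Let $H_0$ be a component of $R(G')$ with $\rho(H_0)=\rho(R(G'))$; by Lemma \ref{lem:matching-extraction} it contains a matching $M$ with $t\ge(c_0-\alpha)r$. Odd lengths $3\le g\le n/320$ come from Theorem \ref{thm:short-cycles}. Since $r$ is bounded, $2r<n/320$ for large $n$, so it remains to cover the odd lengths $g\in[n/320,(C_0-\varepsilon)n]$.

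\textbf{Non-bipartite case.} If $H_0$ is non-bipartite, Corollary \ref{cor:nonbipartite-odd} gives every odd $g\in[2r,2ts^*]$. The same computation as in Theorem \ref{thm:even} gives $2ts^*>(C_0-\varepsilon)n$, which finishes this case.

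\textbf{Bipartite case: building the skeleton.} Suppose $H_0$ is bipartite. Then $(\frac12-\delta)r<\rho(H_0)\le\sqrt{e(H_0)}\le n(H_0)/2$, so $n(H_0)\ge(1-2\delta)r$.
\begin{itemize}
\item Discard at most two edges of $M$, so that $t$ drops by at most $2$.
\item Take a spanning tree $T$ of $H_0$ containing $M$.
\item Apply Lemma \ref{lem:bipartite-skeleton} in $G'\subseteq G$ to $T$. This yields a $2$-connected bipartite $B\subseteq G$ with $n(B)\ge(1-2\delta)(1-\eta)n\ge\frac78 n$.
\end{itemize}
Let $G_0$ be the component of $G$ with $\rho(G_0)=\rho(G)$. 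Since $\rho(G_0)>\frac{n-1}{2}$, we have $n(G_0)>n/2$, so $G_0$ contains $B$. The block of $G_0$ containing $B$ has at least $\frac78 n$ vertices, so it is non-bipartite by Lemma \ref{lem:bipartite-block}. Lemma \ref{lem:parity-ear}, applied inside that block, gives a parity-breaking $B$-path $P^*$ with endpoints $x\in V_a$, $y\in V_b$ and $\ell(P^*)\le n-n(B)+1\le 3\delta n$.

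\textbf{Bipartite case: checking Lemma \ref{lem:one-handle}.}
\begin{itemize}
\item Condition (i): the internal vertices of $P^*$ lie outside $B$, and $B$ contains at least $(1-\eta)\ell$ vertices of every cluster of $T$. Hence each cluster $V_i$ with $i\in V(T)$ contains at most $\eta\ell$ internal vertices of $P^*$. Internal vertices in $V_0$ or in clusters outside $T$ are never used by the embedding, so the proof of Lemma \ref{lem:one-handle} goes through verbatim with $P^*$ treated as external.
\item Condition (ii): since $x\in V_a(a')$ for some tree edge $aa'$ used in $B$, $x$ is typical into $V_{a'}$, and similarly for $y$.
\item Condition (iv): the bipartition of $B$ is inherited from the bipartition of $T$. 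So "parity-breaking" is exactly the statement that $\ell(P^*)+d_T(a,b)$ is odd.
\item Condition (iii): this is the delicate point. If $a$ or $b$ is covered by $M$, delete the at most two matching edges at $a,b$ beforehand; this costs only $O(1)$ in $t$.
\end{itemize}
Lemma \ref{lem:one-handle} then yields all odd lengths in $[\ell(P^*)+m+2t,\ \ell(P^*)+m+2tS^*]$. The lower end is at most $3\delta n+4r<n/320$ by the choice of $\delta$. The upper end is at least $2(t-2)S^*>(C_0-\varepsilon)n$ by \eqref{eq:parameter-main}, for $n$ large.

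\textbf{Main obstacle.} I expect the hardest step to be verifying condition (i) together with the length window. The parity-breaking path may be long (up to order $\delta n$), so one must make sure that it neither exhausts any cluster nor pushes the start of the odd interval beyond the range covered by Theorem \ref{thm:short-cycles}. This is why $\delta$ must be taken small relative to $1/320$ before $d$ and $\eta$ are chosen.
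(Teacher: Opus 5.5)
Your route is the paper's route: the same bipartite/non-bipartite split, the skeleton $B$ built from a spanning tree, Lemma~\ref{lem:bipartite-block}, a parity-breaking $B$-path, and then Lemma~\ref{lem:one-handle}. Applying that lemma once with the whole remaining matching, instead of the paper's chain of intervals $I_j$, is fine, because the lower end $\ell(P^*)+m+2t\le \ell(P^*)+3r$ is already below $n/320$. You are also more careful than the paper on two points: the possibility that $P^*$ passes through $V_0$, and taking the component of maximal spectral radius before arguing that it is large.

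\textbf{The gap is in the final length estimate.} Deleting the at most two matching edges at $a$ and $b$ costs only $O(1)$ in $t$. However, each matching edge contributes about $2S^*\approx 2\ell\approx 2n/r$ to the cycle length, and $r$ is only known to lie in $[m_0,M_0]$. So
\[
2(t-2)S^*\ \ge\ 2(c_0-\alpha)(1-\theta)(1-\zeta)\,n-4\ell-O(r),\qquad \ell\le \frac{n}{r}\le\frac{n}{m_0},
\]
and the loss $4\ell$ is linear in $n$. It is not absorbed ``for $n$ large'' by \eqref{eq:parameter-main}.

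Worse, you take $m_0$ arbitrary, as in the proof of Theorem~\ref{thm:even}. Then $r$ may be small, and $t-2$ can be $\le 0$. For example, with $r=2$ and $R_0$ a single edge, both $a$ and $b$ lie on the only matching edge. In that case Lemma~\ref{lem:one-handle} yields at most one length, not an interval.

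\textbf{The fix.} Choose $m_0$ large in terms of $\alpha,\theta,\varepsilon$ before invoking Theorem~\ref{thm:regularity}, so that both of the following hold:
\begin{itemize}
\item $(c_0-\alpha)m_0>3$, which guarantees $t'\ge 1$;
\item $2(c_0-\alpha)(1-\theta)(1-\eta)-4/m_0>C_0-\varepsilon$, which absorbs the loss $4\ell$.
\end{itemize}
This is exactly what the paper arranges in \eqref{eq:odd-top-pre} and \eqref{eq:odd-parameters}. With this change your argument goes through.
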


\begin{proof}
Recall that $C_0=\frac12(3-\sqrt5)$ and $c_0=\frac12C_0$.
We may assume that $\varepsilon<C_0$, since otherwise the assertion is vacuous.  
Choose positive constants $\alpha,\theta$, and a positive integer $m_0$ so that
\begin{equation}\label{eq:odd-top-pre}
(c_0-\alpha)m_0>3,  \quad 2(c_0-\alpha)(1-\theta)-\frac4{m_0}>C_0-\frac\varepsilon2.
 \end{equation}

 Let $\delta_0=\delta(\alpha)$ be the constant supplied by Lemma
 \ref{lem:matching-extraction}.  
 We first choose $0<\delta<\min\{\delta_0,10^{-4}\},$
 then choose $0<d<\min\{\delta/4,\theta/10,1/4\}$, and finally choose
 $\eta>0$ sufficiently small so that all of the following constraints hold:
 \begin{equation}\label{eq:odd-parameters}
 \begin{split}
  &\eta\le \frac{d^2}{8000}, \quad  d+\eta+\sqrt{2\eta}<\delta, \\
  &\min\Big\{\big(1-\frac d4-\frac{10\eta}{d}\big),\,\big(1-\frac d8-\frac{40\eta}{d}\big)\big(1-2\eta\big)\Big\}>1-\theta,\\
  &2(c_0-\alpha)(1-\theta)(1-\eta)-\frac4{m_0}>C_0-\varepsilon,\\
   &\beta:=1-(1-\eta)^2(1-2\delta)<10^{-3}.
 \end{split}
 \end{equation}
We apply Theorem \ref{thm:regularity} to $G$ with the aforementioned parameters $d,\eta,$ and $m_0$.  
The theorem yields a vertex partition $V(G)=V_0\cup V_1\cup\cdots\cup V_r$  
and the corresponding spanning subgraph $G'$, 
where $V_0$ is the exceptional set of size at most $\eta n$, 
and $V_1,\ldots,V_r$ are the nonexceptional regular clusters of equal size $\ell$.
Consequently,
\begin{equation}\label{eq:rL-lower-odd}
  r\ell=n-|V_0|\ge(1-\eta)n.
 \end{equation}

Let $R(G')$ be the reduced graph associated with $G'$,
which is the graph with vertex set $[r]$ and edge set $\{ij: d(V_i,V_j)>0\}$.
Clearly, $H'$ is a subgraph of the $\ell$-blow-up of $R(G')$.  
Using the same arguments in the proof of Theorem \ref{thm:even} for \eqref{eq:rho-H-odd}, we obtain
 \[
   \ell\cdot \rho(R(G'))\ge\rho(H')>
  \big(\frac12-\delta\big)n
   \ge \big(\frac12-\delta\big)r\ell.
 \]
It follows that $\rho(R(G'))>(\frac12-\delta)r.$
By Lemma \ref{lem:matching-extraction}, 
$R(G')$ has a connected component $R_0$, which contains a matching of size 
 \begin{equation}\label{eq:odd-matching-size}
  t\ge(c_0-\alpha)r.
 \end{equation}

 Suppose first that $R_0$ is non-bipartite. Defined $s^*=\lfloor{(1-\frac d4-\frac{10\eta}{d})\ell\rfloor}$ as in
 \eqref{eq:S-def}.
 Conclusion (i) of Theorem \ref{thm:regularity} bounds $r$ above by
 a constant $M_0=M_0(d,\eta,m_0)$, while $\ell\to\infty$ as $n$ grows.  
 Hence, for sufficiently large $n$, we have both
$\ell\ge \ell_1(d,\eta,2r)$ and $s^*\ge r-1.$
All hypotheses of Corollary \ref{cor:nonbipartite-odd} therefore hold, and $G$
contains a cycle $C_g$ for every odd length $g\in [2r,\,2ts^*]$.
On the other hand, since $r$ is bounded independently of $n$, Theorem
 \ref{thm:short-cycles} covers all odd cycles of smaller lengths.  
Recall $s^*=\lfloor{(1-\frac d4-\frac{10\eta}{d})\ell\rfloor}$.
In view of \eqref{eq:odd-parameters}, \eqref{eq:rL-lower-odd}, and \eqref{eq:odd-matching-size},
we have
 \[
  2ts^*\geq2(c_0-\alpha)rs^*\ge2(c_0-\alpha)r((1-\theta)\ell-1)
  >2(c_0-\alpha)(1-\theta)(1-\eta)-\frac4{m_0}>(C_0-\varepsilon)n
 \]
 for sufficiently large $n$.  The theorem follows in this case.

 We may therefore assume that $R_0$ is bipartite.  Put $r_0=v(R_0)$.  Since
 a bipartite graph of order $r_0$ has spectral radius at most $r_0/2$,
 \[
  \left(\frac12-\delta\right)r<\rho(R_0)\le\frac{r_0}{2},
 \]
 and hence
 \begin{equation}\label{eq:r0-large}
  r_0>(1-2\delta)r.
 \end{equation}
 Extend the matching $M$ to a spanning tree $T$ of $R_0$.  Construct the
 graph $B$ from the tree pairs as in Lemma \ref{lem:bipartite-skeleton}.  By
 \eqref{eq:r0-large}, \eqref{eq:rL-lower-odd}, and
 Lemma \ref{lem:bipartite-skeleton},
 \begin{equation}\label{eq:B-large}
  v(B)\ge(1-\eta)r_0\ell
  >(1-\eta)^2(1-2\delta)n=(1-\beta)n.
 \end{equation}

 Let $C_\rho$ be a component of $G$ with $\rho(C_\rho)=\rho(G)$.  Since
 $\rho(C_\rho)\le v(C_\rho)-1$, we have $v(C_\rho)>n/2$.  The component
 containing $B$ also has more than $n/2$ vertices by \eqref{eq:B-large};
 hence these two components coincide.  Let $Q$ be the block of this component
 containing the $2$-connected graph $B$.  Since $\beta<10^{-3}<1/8$, Lemma
\ref{lem:bipartite-block} shows that $Q$ is non-bipartite.

 By Lemma \ref{lem:parity-ear}, there is a parity-breaking $B$-path $P^*$ such that
 \begin{equation}\label{eq:P-short}
  \ell(P^*)\le v(Q)-v(B)+1\le\beta n+1.
 \end{equation}
 Let its ends be $x\in V_a$ and $y\in V_b$.  Since $x,y\in B$, there are
 edges $aa',bb'\in E(T)$ with
 $x\in V_a(a'),y\in V_b(b').$
 Thus,
 \begin{equation}\label{eq:port-degree-odd}
  d_{V_{a'}}(x),d_{V_{b'}}(y)\ge(d-\eta)\ell.
 \end{equation}
 Since $P^*$ is a $B$-path and $|V_i\setminus V(B)|\le\eta\ell$ for every
 $i\in [r]$,
 \begin{equation}\label{eq:P-cluster-loss}
  \big|V(P^*\!-\!\{x,y\})\cap \,V_i\big|\le\eta\ell.
 \end{equation}

 Delete from $M$ every edge incident with $a$ or $b$, and call the remaining
 matching $M'$.  Then
 \begin{equation}\label{eq:tprime}
  t':=|M'|\ge t-2\ge1.
 \end{equation}
 Order the edges of $M'$ arbitrarily.  The parity-breaking property of $P$
 says precisely that
 \[
  \ell(P^*)+\operatorname{d}_T(a,b)\equiv1\pmod2.
 \]
 All hypotheses of Lemma \ref{lem:one-handle} are now satisfied.  For
 $j=1,\ldots,t'$, let
 \[
  I_j=[\,\ell(P^*)+m_j+2j,\ \ell(P^*)+m_j+2jS^*\,]     
 \]
 be the interval supplied by that lemma.

 These intervals have no odd gap.  Indeed, $m_j\le2r$ gives
 $\min I_j\le\ell(P^*)+2r+2j,$
 whereas
 \[
  \max I_{j-1}\ge\ell(P^*)+2(j-1)S^*.
 \]
 For sufficiently large $n$, we have $S^*\ge r+2$, and hence
 \[
  \max I_{j-1}+2\ge\min I_j
 \]
 for every $j\ge2$.  Moreover, by \eqref{eq:P-short},
 \[
  \min I_1\le\beta n+2r+3<n/320
 \]
 for sufficiently large $n$.  Thus, Theorem \ref{thm:short-cycles} joins the initial
 odd lengths to the interval $I_1\cup\cdots\cup I_{t'}$.

 Finally, \eqref{eq:SP-def}, \eqref{eq:odd-parameters},
 \eqref{eq:odd-matching-size}, \eqref{eq:rL-lower-odd}, and $r\ge m_0$ give
 \begin{align*}
  \max I_{t'}
  &\ge2t'S^*\ge2\bigl((c_0-\alpha)r-2\bigr)(1-\theta)\ell\\
  &\ge2(c_0-\alpha)(1-\theta)(1-\eta)n-\frac{4n}{m_0}\\
  &>(C_0-\varepsilon)n.
 \end{align*}
 Hence, every required odd length occurs.
\end{proof}

\begin{proof}[Proof of Theorem \ref{thm:main}]
 The sharpness assertion is Theorem \ref{prop:sharpness}.  The lower bound follows
 by combining Theorems \ref{thm:even} and \ref{thm:odd}.
\end{proof}

\section*{Acknowledgment}
This project began during 23--26 October 2025, when the first author was visiting the second author at Nanjing University of Science and Technology. The first author is grateful to NUST for the hospitality and warm atmosphere during the visit. The visit led to a weaker result stating that, under the same condition, either all even cycles or all odd cycles \(C_{\ell}\) with \(\ell\in[3,(\frac12(3-\sqrt5)-\varepsilon)n]\) belong to \(G\). The proof makes use of the machinery in \cite{LiNing2023} together with the technique in \cite{ALNS26+}. The current proof is based on a similar proof idea, but with \L{}uczak's technique replaced. The first author is grateful to Prof.~\L{}uczak for sharing his ideas on constructing consecutive odd cycles in an email, before the arXiv version of \cite{ALPZ2023} was submitted. This directly led to the joint work \cite{HLLNP2025} and indirectly to the reading of \cite{Luczak1999}.

\section*{Declaration of AI usage}
The AI (Chatgpt 5.6)
assistant helped with proofreading, grammar checking, and language polishing. In particular, the second author and AI independently found gaps for the original proof of Lemma \ref{lem:closed-walk}. The current version of Lemma \ref{lem:closed-walk} and its proof were originally suggested by AI. The suggested version was rechecked and polished by the authors, which results in the current version. The present authors bear full responsibility
for the correctness of the proofs and the rigor of the writing in this paper.

\end{document}